\documentclass[11pt]{amsart} \usepackage[all]{xy} \usepackage{enumitem} \usepackage{mathtools} \usepackage{mathrsfs} \usepackage{amssymb} \usepackage{amsmath,amsfonts,amsthm} \usepackage{graphicx} \usepackage{upgreek} \usepackage[toc,page]{appendix} \usepackage{bbm} \usepackage{esint} \usepackage{scalerel} \usepackage{stackengine,wasysym} \usepackage{verbatim}  \usepackage{hyperref} \usepackage[dvipsnames]{xcolor} \hypersetup{colorlinks=true, linktocpage=true,citecolor=blue, linkcolor=blue, urlcolor=blue} \usepackage{geometry} 
\newtheorem{teo}{Theorem}[section]
\newtheorem{defi}[teo]{Definition}
\newtheorem{prop}[teo]{Proposition}

\newtheorem{lemma}[teo]{Lemma}

\newtheorem{rmk}[teo]{Remark}
\newcommand{\R}{\mathbb{R}}

\newcommand{\Ric}{\text{Ric}}

\newcommand{\diam}{\text{diam}}
\usepackage[english]{babel}
\usepackage{ragged2e}
\usepackage{amsmath}
\usepackage{mathtools}
\usepackage{xcolor}
\usepackage{amsfonts}
\usepackage{amsthm}

\title{Cheng's Theorem in the Spectral Setting}
\author{Carolina Ruviaro}
\address{IMPA - Instituto de Matemática Pura e Aplicada, Rio de Janeiro, RJ, Brasil.} \email{carolinaruviaro@gmail.com} 

\usepackage{setspace}
\begin{document}
\raggedbottom

\begin{abstract}
We prove a maximal diameter rigidity theorem for closed $3$-dimensional Riemannian manifolds under a spectral lower bound for the Ricci curvature. We show that if $0<\gamma<2$, $K>0$ and $\lambda_1(-\gamma\Delta+\Ric)\ge 2K$, then the optimal bound $\text{diam}(M,g)\leq\frac{\pi}{\sqrt{K}}$ is attained if and only if $(M,g)$ is isometric to the round $3$-sphere of constant sectional curvature $K$.
\end{abstract}

\maketitle \setcounter{tocdepth}{1}

\section{Introduction}

The classical Bonnet-Myers theorem asserts that if a complete Riemannian manifold $(M^n,g)$ satisfies $\Ric_g\geq (n-1)Kg$, then $\diam (M,g)\leq \frac{\pi}{\sqrt{K}}.$ The equality case was characterized by Cheng \cite{Cheng1975}: if $\Ric_g\geq (n-1) Kg$ and $\diam(M,g)=\frac{\pi}{\sqrt{K}},$ then $(M^n,g)$ is isometric to the round $n$-sphere of sectional curvature $K$. In recent years several classical results that traditionally rely on pointwise lower bounds for the Ricci curvature have been revisited under the weaker assumption of a spectral Ricci curvature lower bound. These include spectral analogues of splitting theorems, volume comparison results and Bonnet-Myers type estimates \cite{AntonelliPozzettaXu2024,AntonelliXuBG,HongWang}. 

In this paper, given a Riemannian manifold $(M^n,g)$, a spectral lower bound for the Ricci curvature is formulated in terms of the operator $-\gamma\Delta_g+\rho_g$, where $\gamma>0$ is a constant and \[\rho_g(x) \coloneqq \min_{\substack{v\in T_xM\\ \vert v\vert_g=1}} \Ric_g(v,v),\] that is, $\rho_g(x)$ provides the smallest eigenvalue of the Ricci tensor at $x \in M$. For a constant $k\in \R$, we say that $(M,g)$ satisfies the spectral condition $\lambda_1(-\gamma \Delta_g + \rho_g) \geq k$ if there exists a positive function $u \in C^{2,\alpha}(M)$ with $\alpha \in (0,1)$ such that $-\gamma \Delta_g u + \rho_g u \geq k u$. Equivalently, if for every smooth function $\phi$ with compact support it holds the inequality \[\int_M (\gamma \vert \nabla\phi\vert^2 + \rho_g \phi^2) \ dV_g\ \geq k \int_M \phi^2 \ dV_g. \] Observe that when $\gamma=0$ the spectral condition reduces to a pointwise curvature hypothesis and for that reason throughout this paper we assume $\gamma\neq 0$.

Turning to dimension three, when $0<\gamma<2$ the spectral lower bound $\lambda_1(-\gamma\Delta_g+\rho_g)\geq 2$ implies the sharp diameter estimate $\diam (M,g)\leq \pi$. This was proved by Antonelli-Xu \cite{AntonelliXuBG} and recently Hong-Wang \cite{HongWang} gave a different proof that generalizes to all dimensions $n\geq 3$, providing an upper bound that matches the one obtained by Shen-Ye \cite{ShenYe1997}. Thus in dimension three the spectral condition recovers the same diameter limit as the classical assumption $\Ric_g\geq 2g$ given by the Bonnet-Myers theorem. This naturally raises the question of whether Cheng's maximal diameter rigidity theorem admits a spectral version. The main result of this paper gives an affirmative answer to it.

\begin{teo} \label{Spectral Cheng}
Let $(M^3,g)$ be a closed connected Riemannian manifold, $K>0$ and $0<\gamma <2$. If $\lambda_1(-\gamma\Delta_g+\rho_g)\geq 2K,$ then $\diam (M,g)\leq\frac{\pi}{\sqrt K}$ and equality holds if and only if $(M^3,g)$ is isometric to the round $3$-sphere of constant sectional curvature $K$.
\end{teo}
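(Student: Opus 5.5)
After scaling $g$ by $K$ we may assume $K=1$. The diameter bound $\diam(M,g)\le\pi$ is the Antonelli--Xu theorem, so the content of the statement is rigidity: $\diam=\pi$ forces $(M,g)$ to be the round unit $3$-sphere. The converse is immediate: on the round sphere $\rho_g\equiv 2$, so $u\equiv1$ witnesses $\lambda_1\ge 2$, and the diameter is $\pi$.

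The plan is to reproduce the Antonelli--Xu/Hong--Wang argument in the warped setting, with a positive $u$ satisfying $-\gamma\Delta u+\rho_g u\ge 2u$, and track every inequality. Fix $p,q$ with $d(p,q)=\pi$. The standard approach is a $\mu$-bubble, or weighted minimizing hypersurface, in the annular region between small balls around $p$ and $q$. One minimizes a functional of the form
\[ E(\Omega)=\int_{\partial^*\Omega}u^{\beta}\,d\mathcal H^2-\int_M(\chi_\Omega-\chi_{\Omega_0})\,h\,u^{\beta}\,dV, \]
where $h$ is a function of the distance to $p$ modelled on $2\cot r$ (the mean curvature of geodesic spheres in $S^3$) and $\beta$ is tuned to $\gamma$. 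The second variation then gives a spectral inequality on the bubble $\Sigma$. This is combined with Gauss--Bonnet on $\Sigma$, using $\rho_g$ and the $-\gamma\Delta$ term via the Schoen--Yau rearrangement, which requires $\gamma<2$ so that the cross term $|\nabla u|^2/u^2$ is absorbed with a favourable sign. The resulting ODE comparison for $h$ shows that the bubble cannot exist if the $p$--$q$ distance exceeds $\pi$. When $d(p,q)=\pi$ exactly, one passes to the limit as the small-ball radius $\epsilon\to0$ and $h\to2\cot r$, and all inequalities must become equalities in the limit.

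The rigidity extraction proceeds as follows. Equality in the Cauchy--Schwarz/absorption step with $\gamma<2$ strictly gives $\nabla u\equiv0$ along the relevant region, so $u$ is constant and $\rho_g\ge2$ holds pointwise wherever equality holds. Equality in the Gauss--Bonnet and second-variation steps shows that the foliation by level sets of $r=d(p,\cdot)$ is umbilic with mean curvature $2\cot r$, and that $\Ric(\nu,\nu)=2$. An alternative, cleaner route is to show directly that $u$ must be constant. If one can prove $\rho_g\ge 2$ everywhere, then $\Ric\ge 2g$ and Cheng's theorem finishes the proof. So the target intermediate statement is: diameter equality implies $u$ is constant and $\rho_g\ge2$.

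To get this globally rather than only along one bubble, I would run the argument for \emph{every} family of bubbles between $p$ and $q$, or equivalently use the distance-function level sets as competitors. The goal is to obtain the integral identity $\int_M(\gamma|\nabla u|^2/u^2\cdot c_\gamma+(\rho_g-2)_-)\,\varphi=0$ for a positive weight $\varphi$, where $c_\gamma>0$ precisely because $\gamma<2$. This forces $\nabla u=0$, and then $\rho_g\ge 2$ follows from the spectral inequality with $u$ constant. The main obstacle is exactly this step: the limiting process as $\epsilon\to0$ near the singular endpoints $p,q$ (where $h$ blows up), and ensuring that the equality information is not lost on a null set. The bubbles must sweep out all of $M$, not just one surface. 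I would first try a continuity/monotonicity argument in the warped parameter, showing that the optimal bubbles are the distance spheres $\{r=t\}$ for every $t\in(0,\pi)$, which cover $M\setminus\{p,q\}$.
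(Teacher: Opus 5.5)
Your proposal has a genuine gap, and it sits in the step you yourself flag as ``the main obstacle''. That step is the entire content of the rigidity statement. A $\mu$-bubble diameter argument works by contradiction: if $d(p,q)>\pi$, one fits an $h$ satisfying a strict Riccati inequality into the annulus, and the bubble's stability inequality is violated. At the borderline $d(p,q)=\pi$ the argument simply fails to give a contradiction, and it produces no extremal object. The bubbles $\Sigma_\epsilon$ exist, but their stability inequalities carry a defect that only vanishes as $\epsilon\to0$. Nothing in your sketch stops them from degenerating into the small balls around $p$ or $q$. Still less does anything force them to be the distance spheres $\{r=t\}$ for every $t\in(0,\pi)$, which are not known a priori to be smooth (because of the cut locus) or to be critical points of your functional. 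The ``continuity/monotonicity argument in the warped parameter'' is never supplied. Even a single limiting surface with all inequalities saturated would only give $\nabla u=0$ on that surface, which says nothing about $u$ or $\rho_g$ elsewhere in $M$.

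The paper gets exactly the sweep-out you want, but from a global quantity rather than from bubbles between $p$ and $q$. Set $d\mu=u^\gamma dV_g$ and let $I$ be the $\mu$-weighted isoperimetric profile. The second variation (with normal speed $u^{-\gamma}$) gives $II''\le-\frac12(I')^2-2$ in the viscosity sense, hence $L_u(M)=\int_0^V dv/I(v)\le\pi$. By coarea, $m_p(r)=\mu(B_r(p))$ satisfies $m_p'\ge I(m_p)$ a.e., so $R_p\le L_u(M)$. If $\diam(M,g)=\pi$, then $L_u(M)=\pi$, and Lemma~\ref{lemma bound on L-equality} makes $I$ smooth and a classical solution of the ODE with equality. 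At the same time, $\int_0^\pi\bigl(m_p'/I(m_p)-1\bigr)dr=0$ makes $B_r(p)$ a weighted isoperimetric region for a.e.\ $r$. On each such sphere, combine $A''(v_0)\ge I''(v_0)$ with the second variation, now keeping the previously discarded terms $\gamma\int_\Sigma u^{-\gamma}|\nabla_\Sigma\log u|^2$ and $\gamma(1-\frac{\gamma}{2})\int_\Sigma u^{-\gamma}(u_\nu/u)^2$. The second coefficient is positive exactly because $\gamma<2$. This forces $\nabla u=0$ on a.e.\ $\partial B_r(p)$, coarea then gives $u$ constant, so $\Ric_g\ge 2g$ and Cheng's theorem applies.

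You can also close your gap with no sweep-out at all, using only the Antonelli--Xu bound you already invoke, as in Remark~\ref{remark regularity}. Pick $\beta\in(\gamma,2)$ and set $\lambda_\beta=\lambda_1(-\beta\Delta_g+\rho_g)$; since $\beta>\gamma$, $\lambda_\beta\ge 2$. The diameter bound for $\beta$ gives $\pi=\diam(M,g)\le\pi\sqrt{2/\lambda_\beta}$, so $\lambda_\beta=2$. For a positive first eigenfunction $w$,
\[
2\int_M w^2=\int_M\bigl(\gamma|\nabla w|^2+\rho_g w^2\bigr)+(\beta-\gamma)\int_M|\nabla w|^2\ge 2\int_M w^2+(\beta-\gamma)\int_M|\nabla w|^2,
\]
so $w$ is constant and $\rho_g\equiv 2$, and Cheng's theorem finishes.
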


 We emphasize that our approach leading to the sharp diameter bound differs from that of \cite{HongWang}. Our proof is based on the weighted isoperimetric profile and it was inspired by \cite{AntonelliXuBG}. The spectral lower bound yields, through a second variation argument, a differential inequality for this profile which provides an upper bound for a quantity that involves the inverse of the weighted isoperimetric profile. A coarea argument for geodesic balls shows that this quantity controls the Riemannian diameter, which gives another proof for the estimate in dimension three different from the one contained in \cite{AntonelliXuBG}. In the equality case the rigidity of each step forces the function guaranteed by the spectral hypothesis to be constant, reducing the spectral condition to the classical lower Ricci curvature bound and so Cheng's theorem applies.

We also note that the range $0<\gamma<2$ in Theorem \ref{Spectral Cheng} is sharp. Indeed, at the critical value $\gamma=2$ we construct a family of rotationally symmetric metrics $g_\varepsilon$ on $\mathbb S^3$, which are not the round metric, satisfying both hypotheses \[\lambda_1 (-2\Delta_{g_\varepsilon}+\rho_{g_\varepsilon})=2 \qquad\text{and} \qquad \diam (\mathbb S^3,g_\varepsilon)=\pi,\]
showing that the diameter rigidity fails.

Moreover we discuss the higher dimensional case of Theorem \ref{Spectral Cheng}. The sharp spectral Bonnet-Myers estimate of Hong-Wang \cite{HongWang} shows that, when $n>3$ and $0<\gamma<\frac{4}{n-1}$, the natural lower bound \[\lambda_1(-\gamma\Delta_g+\rho_g)\geq n-1\] no longer yields the classical diameter bound $\pi$. In fact, using their sharpness examples, we show that for $n>3$ and such $\gamma$ there is no constant $C=C(n,\gamma)$ such that the condition $\lambda_1(-\gamma\Delta_g+\rho_g)\geq C$ simultaneously implies  $\diam (M,g)\leq\pi$ and guarantees that the equality case  $\diam (M,g)=\pi$ characterizes the unit sphere.

The paper is organized as follows: in Section \ref{section preliminary} we present some preliminary calculations, in Section \ref{section proof} we prove Theorem \ref{Spectral Cheng}, in Section \ref{section range gamma} we study the sharpness of the range $0<\gamma<2$ and construct the counterexamples at $\gamma=2$. Finally in Section \ref{section case n>3} we discuss the higher dimensional case and explain why a spectral analogue of Cheng theorem cannot hold when $n=2$ or $n> 3$.

\section*{Acknowledgements}
The author would like to thank her advisor, Lucas Ambrozio, for his helpful suggestions. This study was financed in part by the Coordenação de Aperfeiçoamento de Pessoal de Nível Superior - Brasil (CAPES) - Finance Code 001 and FAPERJ - Fundação Carlos Chagas Filho de Amparo à Pesquisa do Estado do Rio de Janeiro, Processo SEI-260003/001431/2026.

\section{Preliminary Calculations}\label{section preliminary}

In this section we use the notion of a function satisfying a differential inequality in the \textit{viscosity sense}. The definition given here is for the specific equation that we will use in this paper, but this notion holds more generally.

\begin{defi}
    A lower semicontinuous function $I\colon [0,V] \longrightarrow [0,+\infty)$ satisfies, for some $k>0$, the inequality
    \begin{equation}\label{definition viscosity}
        I''I +\dfrac{1}{2}(I')^2 \leq -2k
    \end{equation}
    in the viscosity sense at a point $v_0 \in (0,V)$ if for every smooth function $\varphi$ defined in some neighborhood $\mathcal V$ of $v_0$ such that $\varphi(v_0) = I(v_0)$ and $\varphi(x) \leq I(x)$ in $\mathcal V$, we have $\varphi(v_0) \varphi''(v_0) +\dfrac{1}{2}(\varphi'(v_0))^2 \leq -2k$ in the classical sense. The inequality \eqref{definition viscosity} is satisfied in the viscosity sense on the interval $(0,V)$ if the viscosity condition holds for every point in $(0,V)$.
\end{defi}

Based on this idea, we prove three lemmas that will be useful in Section \ref{section proof}.

\begin{lemma}\label{lemma integrability}
Let $V>0$ and $I \colon [0,V]\longrightarrow[0,+\infty)$ be a continuous function which is positive on $(0,V)$ and satisfies $I(0)=I(V)=0.$ Assume that there exists a constant $c>0$ such that \[c\min\{v,V-v\}^{2/3} \leq I(v)\] for $v$ sufficiently close to $0$ and $V$. Then the integral of $\frac{1}{I(v)}$ over $(0,V)$ is finite.
\end{lemma}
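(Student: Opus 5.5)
The plan is to split $(0,V)$ into three pieces: two short intervals at the endpoints, where the lower bound on $I$ is available, and a compact middle interval, where continuity and positivity of $I$ do the work. By hypothesis there is $\delta\in(0,V/2)$ such that
\[
c\min\{v,V-v\}^{2/3}\le I(v)
\]
for all $v\in(0,\delta]\cup[V-\delta,V)$. Shrinking $\delta$ if necessary, we may assume $\delta<V/2$, so that $\min\{v,V-v\}=v$ on $(0,\delta]$ and $\min\{v,V-v\}=V-v$ on $[V-\delta,V)$.

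On the middle interval $[\delta,V-\delta]$, the function $I$ is continuous and strictly positive on a compact set, so it attains a minimum $m>0$. Hence
\[
\int_\delta^{V-\delta}\frac{dv}{I(v)}\le \frac{V-2\delta}{m}<\infty .
\]

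Near $0$ we have $1/I(v)\le c^{-1}v^{-2/3}$, and the exponent $2/3<1$ makes this integrable:
\[
\int_0^\delta \frac{dv}{I(v)}\le \frac1c\int_0^\delta v^{-2/3}\,dv=\frac{3}{c}\,\delta^{1/3}.
\]
Near $V$ the same estimate applies after the change of variables $w=V-v$:
\[
\int_{V-\delta}^V\frac{dv}{I(v)}\le \frac1c\int_0^\delta w^{-2/3}\,dw=\frac3c\,\delta^{1/3}.
\]

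Adding the three contributions gives
\[
\int_0^V\frac{dv}{I(v)}\le \frac{V-2\delta}{m}+\frac{6}{c}\,\delta^{1/3}<\infty .
\]
Since $1/I$ is positive and continuous on $(0,V)$, it is measurable, and this bounds the improper (equivalently, Lebesgue) integral.

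There is no real obstacle here. The only points needing care are that the compact middle interval stays away from the endpoints, so that $m>0$, and that the exponent $2/3$ is strictly less than $1$, which is exactly what makes the endpoint integrals converge.
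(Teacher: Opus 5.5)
Your proof is correct and follows the same route as the paper: the lower bound $I(v)\ge c\min\{v,V-v\}^{2/3}$ reduces integrability near each endpoint to that of $v^{-2/3}$, and continuity plus positivity of $I$ handle the compact middle interval. The only difference is that you write out explicit constants, which the paper leaves implicit.
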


\begin{proof}
Near $v=0$, the lower bound gives \[\frac1{I(v)} \leq \frac{1}{c} v^{-2/3} \] and $v^{-2/3}$ is integrable near $0$. Similarly, $\frac{1}{I(v)}$ is integrable near $V$. Since $I$ is positive and continuous on every compact subinterval of $(0,V)$, the conclusion follows.
\end{proof}

\begin{lemma}\label{lemma bound on L}
Consider $V>0$, $k>0$ and let $I\colon [0,V]\longrightarrow[0,+\infty)$ be a continuous function which is positive on $(0,V)$ and satisfies $I(0)=I(V)=0.$ Assume that \[L\coloneqq\int_0^V\frac{dv}{I(v)}<+\infty\] and that \[ I''I \leq -\frac{1}{2}(I')^2-2k\] in the viscosity sense on $(0,V)$. Then $L\leq\frac{\pi}{\sqrt{k}}.$
\end{lemma}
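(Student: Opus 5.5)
The plan is to change variables so that the viscosity inequality becomes a Sturm--Liouville type inequality $g''+kg\le 0$. Set
\[
t(v)\coloneqq\int_0^v\frac{dw}{I(w)},
\]
which is a strictly increasing $C^1$ bijection $[0,V]\to[0,L]$, with $dt/dv=1/I$. Define $g(t)\coloneqq I(v(t))^{1/2}$ for $t\in[0,L]$. Formally, $\frac{d}{dt}=I\frac{d}{dv}$, so
\[
\frac{dg}{dt}=\tfrac12 I^{1/2}I',\qquad \frac{d^2g}{dt^2}=\tfrac12 I\Bigl(I^{1/2}I''+\tfrac12 I^{-1/2}(I')^2\Bigr)=\tfrac12 I^{1/2}\Bigl(II''+\tfrac12(I')^2\Bigr)\le -k\,I^{1/2}=-k\,g .
\]
So $g$ is continuous on $[0,L]$, positive on $(0,L)$, vanishes at both endpoints, and satisfies $g''+kg\le 0$ (in the appropriate weak sense). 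Sturm comparison with $\sin(\sqrt k\,t)$ then gives $L\le \pi/\sqrt k$.

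To make this rigorous I argue by contradiction with a touching argument. Suppose $L>\pi/\sqrt k$. Choose $k'<k$ close to $k$ and $a>0$ with $[a,a+\pi/\sqrt{k'}]\subset(0,L)$. Let
\[
\psi_s(t)=s\sin\bigl(\sqrt{k'}(t-a)\bigr)\quad\text{on } [a,a+\pi/\sqrt{k'}].
\]
Since $g$ is bounded below by a positive constant on this compact interval, $\psi_s<g$ there for small $s$. Increase $s$ to the first value $s_0$ at which $\psi_{s_0}$ touches $g$ from below. The contact point $t_0$ is interior, because $\psi_{s_0}$ vanishes at the endpoints while $g>0$. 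Pulling back, $\varphi(v)\coloneqq\psi_{s_0}(t(v))^2$ touches $I$ from below at $v_0=v(t_0)$. Reversing the chain-rule computation above, the viscosity inequality for $I$ applied to $\varphi$ gives $\psi_{s_0}''+k\psi_{s_0}\le 0$ at $t_0$. But the left-hand side equals $(k-k')\psi_{s_0}(t_0)>0$, a contradiction. Letting $k'\uparrow k$ gives $L\le\pi/\sqrt k$.

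The main obstacle is that $\varphi=\psi\circ t$ squared is only as regular as $t$, which is merely $C^1$ with derivative $1/I$; this is not smooth, so $\varphi$ is not a legitimate test function. I would first establish extra regularity of $I$ from the hypothesis. Since $II''\le -\frac12(I')^2-2k<0$ in the viscosity sense and $I>0$, we get $I''\le 0$ in the viscosity sense on $(0,V)$, so $I$ is concave. Hence $I$ is locally Lipschitz with one-sided derivatives, twice differentiable a.e. (Alexandrov), and $t$ is $C^{1,1}_{loc}$.

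With this, I would do the touching in the $v$-variable with a genuinely smooth test function. Take $\tilde t$ to be the second-order Taylor polynomial of $t$ at $v_0$, corrected by a small $-\delta(v-v_0)^2$ term so that $\psi\circ\tilde t$ still lies below $I$ near $v_0$. The strict margin $(k-k')\psi(t_0)>0$ absorbs the $\delta$-error. Concavity of $I$ controls the second-order behaviour of $t$ via superdifferentials, which is what makes this correction possible. If this becomes delicate, an alternative is to mollify $I$ from below using sup-convolution, which preserves the viscosity inequality up to small errors, run the classical argument for the smooth approximants, and pass to the limit in $L$.
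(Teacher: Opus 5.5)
Your proof is correct, but it takes a different route from the paper's. The paper works with $I$ itself. Concavity gives the monotone right derivative $p=I'_+$, which is differentiable a.e. At each such point, a quadratic test function built from $I(v_0+h)=I(v_0)+p(v_0)h+\frac12 p'(v_0)h^2+o(h^2)$ yields $Ip'\le-\frac12p^2-2k$. Then $q(r)=\frac12p(v(r))$ satisfies the Riccati inequality $q'+q^2+k\le0$ a.e. This is integrated via $\theta=\arctan(q/\sqrt k)$, using that $\theta$ is nonincreasing to pass from the a.e.\ bound $\theta'\le-\sqrt k$ to $\theta(b)-\theta(a)\le-\sqrt k(b-a)$.

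Since $q=g'/g$ for your $g=\sqrt{I\circ v}$, the paper's argument is the Pr\"ufer/Riccati form of your linearized inequality $g''+kg\le0$; your $g$ reappears as $F$ in Lemma \ref{lemma bound on L-equality}. You instead run a Sturm comparison, sliding $s\sin(\sqrt{k'}(t-a))$ under $g$ and using the viscosity definition only once, at the contact point. This avoids the a.e.\ calculus and the monotone-integration step. The paper's route, in turn, directly produces the rigidity information ($\theta$ exactly linear) needed for the equality case, which your contradiction argument does not give.

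Two details of your regularity step should be stated differently.

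First, Alexandrov's theorem or $t\in C^{1,1}_{loc}$ does not give a second-order expansion of $t$ at the particular point $v_0$. What you need is that $I$ is differentiable at $v_0$. This holds because the $C^1$ function $\varphi=\psi_{s_0}(t(\cdot))^2$ touches the concave $I$ from below there, forcing $I'_-(v_0)\le\varphi'(v_0)\le I'_+(v_0)\le I'_-(v_0)$. Hence $t'=1/I$ is differentiable at $v_0$, and $\varphi$ has a Peano second-order expansion at $v_0$.

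Second, lowering $t$ by $\delta(v-v_0)^2$ lowers $\psi\circ t$ only if $\psi'(t_0)>0$. It is cleaner to test directly with $P(v)=\varphi(v_0)+\varphi'(v_0)(v-v_0)+\frac12(\varphi''(v_0)-2\delta)(v-v_0)^2$, which satisfies $P\le\varphi\le I$ near $v_0$. Using $\varphi(v_0)=I(v_0)$ and $\varphi'(v_0)=I'(v_0)$, the chain rule gives $I\varphi''+\frac12(\varphi')^2=2I^{-1/2}\psi_{s_0}''$ at $v_0$. Letting $\delta\to0$ then yields $\psi_{s_0}''(t_0)\le-k\psi_{s_0}(t_0)$, which is the desired contradiction. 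The sup-convolution fallback is unnecessary.
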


\begin{proof}
    The viscosity inequality means that if a smooth function $\varphi$ touches $I$ from below at $v_0\in(0,V)$ then $I(v_0)\varphi''(v_0) \leq - \frac{1}{2}\bigl(\varphi'(v_0)\bigr)^2-2k.$ Since $I(v_0)>0$ this implies in particular that $\varphi''(v_0)\leq0.$ Hence $I$ is concave on $(0,V)$. In particular, it is locally Lipschitz and hence locally absolutely continuous. Moreover, for a concave function the right derivative $p(v)\coloneqq I'_+(v)$ exists at every $v\in(0,V)$ and is nonincreasing. In particular, $p$ is differentiable almost everywhere.
    
    We claim that at every point $v_0$ where $p$ is differentiable holds
\begin{equation}\label{inequality I and p}
    I(v_0)p'(v_0) \leq -\frac{1}{2}p(v_0)^2-2k.
\end{equation}
Indeed, locally absolute continuity of $I$ and differentiability of $p$ at $v_0$ provides for small $h$ the equalities
\begin{align*}
    I(v_0+h)-I(v_0) &= \int_0^h p(v_0+t)\ dt = \int_0^h p(v_0)+p'(v_0)t + o(\vert t \vert)\ dt\\
    &=p(v_0) h +\dfrac{1}{2}p'(v_0)h^2 + o(h^2).
\end{align*}
Thus for every $\varepsilon>0$, the function \[\varphi_\varepsilon(v) = I(v_0) + p(v_0)(v-v_0) +\frac{1}{2}\bigl(p'(v_0)-2\varepsilon\bigr)(v-v_0)^2\] in a small neighborhood of $v_0$ touches $I$ from below. Then the viscosity inequality gives \[I(v_0)\bigl(p'(v_0)-2\varepsilon\bigr) \leq -\frac{1}{2}p(v_0)^2-2k.\] Letting $\varepsilon\rightarrow 0$, we conclude that \eqref{inequality I and p} holds.

Now define \[r(v)\coloneqq \int_0^v\frac{ds}{I(s)}.\] Since $I>0$ on $(0,V)$, $r$ is strictly increasing and maps $[0,V]$ onto $[0,L]$. If we let $v=v(r)$ be its inverse, we have $v'(r) = I(v(r))>0$ on the interior $(0,L)$. Defining \[q(r)\coloneqq \frac{1}{2} p(v(r)),\] as $p$ is nonincreasing and $v$ is increasing, $q$ is also nonincreasing. Hence at almost every $r$, the chain rule and \eqref{inequality I and p} give \[q'(r) = \frac{1}{2} p'(v(r))v'(r) = \frac{1}{2} I(v(r))p'(v(r)) \leq -\frac14p(v(r))^2-k = -q(r)^2-k.\] Hence for almost every $r \in (0,L)$ we have: \[q'(r)+q(r)^2+k\leq 0.\] Set \[ \theta(r) \coloneqq  \arctan\left(\frac{q(r)}{\sqrt{k}}\right). \] Since $q$ is nonincreasing, so is $\theta$. Moreover, at almost every point, \[ \theta'(r) = \frac{\sqrt{k}}{k+q(r)^2} q'(r) \leq -\frac{\sqrt{k}}{k+q(r)^2} \bigl(q(r)^2+k\bigr) = -\sqrt{k}.\] Now let $0<a<b<L$. Because $\theta$ is nonincreasing, 
\begin{align}\label{difference theta}
    \theta(b)-\theta(a) \leq \int_a^b\theta'(r)\ dr \leq -\sqrt{k}(b-a).
\end{align}
Thus $\sqrt{k}(b-a) \leq \theta(a)-\theta(b).$ But $-\frac{\pi}{2} \leq \theta(r) \leq \frac{\pi}{2}$ and so $\theta(a)-\theta(b)\leq\pi.$ Therefore \[ b-a\leq\frac{\pi}{\sqrt{k}}\] and letting $a\rightarrow 0$ and $b\rightarrow L$, we conclude that $L\leq\frac{\pi}{\sqrt{k}}.$
\end{proof}

\begin{lemma}\label{lemma bound on L-equality}
Under the hypotheses of Lemma \ref{lemma bound on L}, assume that $L=\frac{\pi}{\sqrt{k}}.$ Then there exists a constant $C>0$ such that \[I(v(r)) = C^2\sin^2(\sqrt{k}\ r),\] where $v=v(r)$ is the inverse of $r(v)\coloneqq \int_0^v I(s)^{-1}\ ds$. In particular, $I$ is smooth on $(0,V)$ and satisfies in the classical sense the equation \[I''I = -\frac{1}{2}(I')^2-2k.\]
\end{lemma}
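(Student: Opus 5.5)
The plan is to revisit the proof of Lemma \ref{lemma bound on L} and observe that equality $L=\frac{\pi}{\sqrt k}$ forces every inequality there to be an equality. With $q(r)=\frac12 p(v(r))$ and $\theta=\arctan(q/\sqrt k)$, we had, for $0<a<b<L$,
\[\sqrt k(b-a)\le \theta(a)-\theta(b)\le \theta(0^+)-\theta(L^-)\le \pi .\]
Letting $a\to0$ and $b\to L$ and using $\sqrt k L=\pi$ gives $\theta(0^+)=\frac{\pi}{2}$ and $\theta(L^-)=-\frac{\pi}{2}$. It also gives equality $\theta(a)-\theta(b)=\sqrt k(b-a)$ for all $a<b$. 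To see this, split $(0,L)$ as $(0,a)\cup(a,b)\cup(b,L)$. On each piece the decrease of $\theta$ is at least $\sqrt k$ times the length, and the total decrease is at most $\pi=\sqrt k L$. Hence each piece contributes exactly $\sqrt k$ times its length. Therefore $\theta(r)=\frac{\pi}{2}-\sqrt k r$ on $(0,L)$, and so $q(r)=\sqrt k\cot(\sqrt k r)$. In particular $q$ is smooth and no jump part of the monotone function $\theta$ can occur, since any jump would add to the total decrease.

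Next, recover $I$. Set $J(r):=I(v(r))$. Since $v$ is locally Lipschitz with $v'=I(v)$, and $I$ is concave hence locally Lipschitz with right derivative $p$, the chain rule gives
\[J'(r)=p(v(r))\,I(v(r))=2q(r)J(r)\]
for a.e. $r$. It holds at every $r$ where $I$ is differentiable at $v(r)$. This is all but countably many points, because $v$ is a homeomorphism and $I$ is concave. Since $J$ is locally absolutely continuous and positive, $(\log J)'=2\sqrt k\cot(\sqrt k r)$ a.e. Integrating yields $\log J=2\log\sin(\sqrt k r)+\text{const}$, that is, $J(r)=C^2\sin^2(\sqrt k r)$ with $C>0$.

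Finally, smoothness. The function $r\mapsto v(r)$ solves $v'=J(r)$ with $J$ smooth, so $v$ is smooth with $v'>0$ on $(0,L)$. Its inverse $r(v)$ is therefore smooth, and so is $I=J\circ r$. The equation then follows by a direct computation:
\[\frac{dI}{dv}=\frac{J'}{J}=2\sqrt k\cot(\sqrt k r),\qquad \frac{d^2I}{dv^2}=\frac{1}{J}\,\frac{d}{dr}\bigl(2\sqrt k\cot\bigr)=-\frac{2k}{C^2\sin^4(\sqrt k r)}.\]
Hence
\[I''I=-\frac{2k}{\sin^2},\qquad -\frac12(I')^2-2k=-2k\cot^2-2k=-\frac{2k}{\sin^2},\]
and the two sides agree.

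The main obstacle is the first step: making rigorous that $\theta$ is exactly affine. The function $\theta$ is only monotone, so we need the inequality $\theta(b)-\theta(a)\le\int_a^b\theta'$, which holds for nonincreasing functions because their singular and jump parts are nonpositive. Combined with the saturation argument above, this shows that the a.e. bound $\theta'\le-\sqrt k$ must be an equality and that the singular part of $\theta$ vanishes.
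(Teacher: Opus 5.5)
Your proposal is correct and follows the paper's proof essentially step by step. The steps are: saturating $\theta(b)-\theta(a)\le-\sqrt{k}(b-a)$ through the three-piece splitting of $(0,L)$ to get $\theta(r)=\frac{\pi}{2}-\sqrt{k}\,r$; integrating the logarithmic derivative of $I(v(r))$ to obtain $C^2\sin^2(\sqrt{k}\,r)$; and deducing smoothness from $v'=I(v)$. The only differences are cosmetic: you work with $J=I\circ v$ instead of $F=\sqrt{J}$, and you check the ODE by differentiating $I=J\circ r$ directly rather than via $q'+q^2+k=0$ with $q=\tfrac12 I'(v(r))$.
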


\begin{proof}
Assume that $L=\frac{\pi}{\sqrt{k}}.$ We keep the notation from the proof of Lemma \ref{lemma bound on L}: \[p(v)\coloneqq I'_+(v), \qquad q(r)\coloneqq \frac{1}{2} p(v(r)) \qquad \text{and} \qquad \theta(r)\coloneqq \arctan\left(\frac{q(r)}{\sqrt{k}}\right).\]
It is already known by inequality \eqref{difference theta} that for every $0<a<b<L$ \[\theta(b)-\theta(a) \leq -\sqrt{k}(b-a).\] Since $\theta$ is nonincreasing, the extreme limits $\theta(0+)$ and $\theta(L-)$ exist. Letting $a\rightarrow 0$ and $b\rightarrow L$, we obtain \[\theta(L-)-\theta(0+) \leq -\sqrt{k}L = -\pi.\] On the other hand, $-\frac{\pi}{2} \leq \theta(r) \leq \frac{\pi}{2}$ and therefore $\theta(L-)-\theta(0+)\geq-\pi$ which gives the equality $\theta(L-)-\theta(0+)=-\pi.$ So since $\theta$ is nonincreasing, $\theta(0+)=\frac{\pi}{2}$ and $\theta(L-)=-\frac{\pi}{2}.$ 

We claim that equality holds in \eqref{difference theta} for every $0<a<b<L$. Indeed, if the inequality were strict for some $a<b$, there would exist $\delta>0$ such that $\theta(b)-\theta(a) = -\sqrt{k}(b-a) - \delta$. Choose $\varepsilon>0$ such that $0<\varepsilon<a<b<L-\varepsilon.$ Applying \eqref{difference theta} on the intervals $(\varepsilon,a)$ and $(b,L-\varepsilon)$ and adding the two inequalities with the difference $\theta(b) - \theta(a)$ we would obtain \[\theta(L-\varepsilon)-\theta(\varepsilon) \leq -\sqrt{k}(L-2\varepsilon) - \delta.\] Letting $\varepsilon\rightarrow 0$, we would get \[\theta(L-)-\theta(0+) \leq -\sqrt{k}L -\delta= -\pi-\delta < -\pi\] which is a contradiction. Therefore \[\theta(b)-\theta(a) = -\sqrt{k}(b-a)\] for every $0<a<b<L$ and hence for all $r \in (0,L)$, \[\theta(r) = \frac{\pi}{2}-\sqrt{k}\ r.\] By definition of $\theta(r)$ we obtain \[q(r) = \sqrt{k}\cot(\sqrt{k}\ r).\] Now define \[F(r)\coloneqq \sqrt{I(v(r))}.\]

We claim that $F$ is locally absolutely continuous on $(0,L)$. Indeed, since $I$ is concave, it is locally Lipschitz and since $v$ is Lipschitz, $F$ is also locally Lipschitz. In particular, it is locally absolutely continuous on $(0,L)$. Hence at almost every $r\in(0,L)$ the chain rule gives
\[F'(r) = \frac{1}{2\sqrt{I(v(r))}}
I'(v(r))v'(r) = \frac{1}{2\sqrt{I(v(r))}}
I'(v(r))I(v(r)) = \frac{1}{2} I'(v(r))F(r). \]
At every point where $I'$ exists we know that $I'(v(r))=p(v(r))$ and hence
\[\frac{F'(r)}{F(r)} = \frac{1}{2}p(v(r)) = q(r) = \sqrt{k}\cot(\sqrt{k}\ r)\]
for almost every $r\in(0,L)$. But \[\frac{d}{dr} \log\sin(\sqrt{k}\ r) = \sqrt{k}\cot(\sqrt{k}\ r). \] Therefore \[ \frac{d}{dr} \left[\log F(r) - \log\sin(\sqrt{k}\ r) \right] = 0 \] almost everywhere on $(0,L)$. Since the function in brackets is locally absolutely continuous, it is
constant. Thus there exists $C>0$ such that \[F(r)= C\sin(\sqrt{k}\ r). \] Recalling that $F(r)^2=I(v(r)),$ we obtain \[I(v(r))= C ^2\sin^2(\sqrt{k}\,r).\] Now \[v'(r) = I(v(r)) = C ^2\sin^2(\sqrt{k}\ r)\] which is smooth and strictly positive for $r\in(0,L)$. Hence $v$ is smooth on $(0,L)$ with nonvanishing derivative. Therefore its inverse $r=r(v)$ is smooth on $(0,V)$. Since \[I(v) = C^2\sin^2\bigl(\sqrt{k}\,r(v)\bigr),\] it follows that $I\in C^\infty((0,V)).$ We may now differentiate classically. Since $q(r) = \sqrt{k}\cot(\sqrt{k}\ r)$, we have $q'+q^2+k=0.$ On the other hand, $q(r) = \frac{1}{2}I'(v(r))$ and so \[ q'(r) = \frac{1}{2}I''(v(r))v'(r) = \frac{1}{2}I(v(r))I''(v(r)).\] Therefore \[\frac{1}{2}I''I+ \frac{1}{4}(I')^2 + k = 0\] and hence $I''I = -\frac{1}{2}(I')^2-2k$.
\end{proof}

\section{Proof of Theorem \ref{Spectral Cheng}} \label{section proof}

We now apply the computational lemmas of Section \ref{section preliminary} to the spectral Ricci setting. By scaling we may assume without loss of generality that $K=1$. Since $\lambda_1(-\gamma \Delta_g + \rho_g) \geq 2$, take $u$ a positive $C^{2,\alpha}(M)$ function which satisfies $-\gamma\Delta_gu+\rho_gu \geq 2u.$ By Remark \ref{remark regularity} below we can suppose that $u$ is actually a smooth function. Note that the multiplication of $u$ by a positive constant does not affect the above inequality, so we can always assume that $\min_Mu=1.$ Define the weighted measure and weighted perimeter by \[d\mu\coloneqq u^\gamma\ dV_g \qquad  \text{and} \qquad P_\mu(E)\coloneqq\int_{\partial^*E}u^\gamma\ d\mathcal H_g^2\] where $E$ is a set of finite perimeter and $\partial^*E$ is its reduced boundary. We also fix $V\coloneqq\mu(M)=\int_Mu^\gamma\ dV_g.$ Consider the weighted isoperimetric profile \[I(v) \coloneqq \inf\left\{P_\mu(E)\colon E\subset\subset M\text{ has finite perimeter and } \mu(E)=v \right\}\] for $0\leq v\leq V$. Since $M$ is compact and $u$ is positive, isoperimetric minimizers exist for every $v\in(0,V)$ (see, for instance, Theorem 2.1 in \cite{Castro2022}). As $n=3$, their boundaries are smooth by standard facts of Geometric Measure Theory. Furthermore, $I$ is positive on $(0,V)$ and $I(0)=I(V)=0$. By Lemma 2 in \cite{AntonelliXuBG} the profile satisfies
\begin{equation}\label{viscosity}
    I''I \leq -\frac{(I')^2}{2}-2
\end{equation}
in the viscosity sense on $(0,V)$. We emphasize that the computation leading to \eqref{viscosity} is the first and second variation computation in \cite[equations (34)-(44)]{AntonelliXuBG} and we shall return to those computations for the analysis of the equality case. 

Since $M$ is a compact manifold, there exists a constant $C>0$ such that \[ C\min\{v,V-v\}^{2/3} \leq I(v)\] near the endpoints. Indeed, if $0<v\leq \frac{V}{2}$, for all sets $E\subset \subset M$ with $\mu(E) =v$ and finite perimeter Lemma 3.1 in \cite{Castro2022} gives a constant $C>0$ such that $P_\mu(E) \geq C\ v^{\frac{2}{3}}$. Taking the infimum, it follows that $I(v) \geq C \ v^{\frac{2}{3}}$. Since the isoperimetric profile satisfy the symmetric property $I(s) = I(V-s)$, we have that for $v\geq \frac{V}{2}$, $I(v) = I(V-v) \geq C \ (V-v)^{\frac{2}{3}}$ and thus we have the desired lower bound.
 
Once $I$ is continuous (see, for instance, Proposition 5.6 in \cite{ChodoshLiMinterStryker2026}), $I(0)=I(V)=0$ and it is positive on $(0,V)$, Lemmas \ref{lemma integrability} and \ref{lemma bound on L} therefore apply and give
\begin{equation}\label{Ru(M)}
    L_u(M) \coloneqq \int_0^V\frac{dv}{I(v)} \leq \pi.
\end{equation}
We next compare the quantity $L_u(M)$ with the Riemannian diameter. We observe that a similar strategy was used in \cite{Gallot1988} for other purposes. 

Fix $p\in M$ and let $R_p\coloneqq\max_{x\in M}d_g(p,x).$ Define \[m_p(r) \coloneqq \mu(B_r(p)) = \int_{B_r(p)}u^\gamma\ dV_g, \qquad 0\leq r\leq R_p,\] where $B_r(p) = \{x  \in M \colon d_g(p,x)<r\}$. The distance function $d_p=d_g(p,\cdot)$ is Lipschitz and $\vert \nabla d_p\vert=1$ almost everywhere on $M$. By the coarea formula, \[ m_p'(r) = P_\mu(B_r(p))\] for almost every $r\in(0,R_p)$. In particular, by the definition of $I$,
\begin{equation}\label{m'}
    m_p'(r) \geq I(m_p(r))
\end{equation}
for almost every $r\in(0,R_p)$. Since $m_p(0)=0$, $m_p(R_p)=V$ and $m_p$ is absolutely continuous, the change of variables formula gives
\begin{align}\label{Ru(M) inequality}
    L_u(M) = \int_0^V\frac{dv}{I(v)} = \int_0^{R_p} \frac{m_p'(r)}{I(m_p(r))}\ dr \geq \int_0^{R_p}1\ dr = R_p.  
\end{align}
Taking the supremum over $p$ yields
\begin{equation}\label{diam and Ru(M)}
    \diam(M,g) \leq L_u(M).
\end{equation}
Combining \eqref{diam and Ru(M)} with
\eqref{Ru(M)}, we obtain
\begin{equation}\label{diameter chain}
    \diam(M,g) \leq L_u(M) \leq \pi.
\end{equation}
This proves the diameter estimate. We now consider the equality case, that is, assume that $\diam(M,g)=\pi.$ The chain of inequalities \eqref{diameter chain} gives \[ \pi = \diam(M,g) \leq L_u(M) \leq \pi\] and therefore $L_u(M)=\pi$. By Lemma \ref{lemma bound on L-equality}, the profile $I$ is smooth on $(0,V)$ and satisfies
\begin{equation}\label{viscosity equality}
    I''I = -\frac{1}{2}(I')^2-2
\end{equation}
pointwise in the classical sense. Choose a point $p\in M$ such that $R_p=\pi.$ We then obtain by equality in \eqref{Ru(M) inequality} that \[0 = L_u(M)-R_p = \int_0^\pi \left(\frac{m_p'(r)}{I(m_p(r))}-1 \right)\ dr.\] Since the integrand is nonnegative by \eqref{m'}, we have \[m_p'(r)=I(m_p(r))\] for almost every $r\in(0,\pi)$. Therefore for almost every $r\in(0,\pi)$,
\begin{equation}\label{balls minimizer}
    P_\mu(B_r(p)) = I(\mu(B_r(p)))
\end{equation}
and $B_r(p)$ is a weighted isoperimetric region for almost every $r$. Since we are in dimension three, by standard facts of Geometric Measure Theory this in particular implies that these balls have smooth boundary. 

We now return to the second variation computation in Lemma 2 of \cite{AntonelliXuBG} and keep track of two terms which were there discarded. Fix a radius $r$ for which \eqref{balls minimizer} holds and put \[E\coloneqq B_r(p), \qquad \Sigma\coloneqq\partial E \qquad \text{and} \qquad v_0\coloneqq\mu(E).\] Consider a smooth family $\{E_t\}_{t \in (-\varepsilon,\varepsilon)}$ with $E_0 = E$ and consider a normal variation with speed $\varphi = u^{-\gamma}$, that is, take as the variational vector field along $\partial E_t$ at $t=0$ the vector $\varphi \nu$, where $\nu$ is the outer unit normal of $\partial E_t$, and normal acceleration $(\varphi \varphi_\nu)\nu$ at $t=0$, where $\varphi_\nu \coloneqq \langle \nabla \varphi, \nu\rangle$. Put $A(t) = \int_{\partial E_t}u^{\gamma}$ and $V(t) = \int_{ E_t}u^{\gamma}$. The second variation at $t=0$ is given by
\[\begin{aligned}
\frac{d^2 A}{dt^2}(0)
={}& \int_{\Sigma} -\Delta_{\Sigma}u^{-\gamma} -\Ric(\nu,\nu)u^{-\gamma} -\Vert \Pi\Vert^2u^{-\gamma} -\gamma u^{-2-\gamma}u_\nu^2 \\
&\quad +\gamma u^{-\gamma-1} \left( \Delta u-\Delta_{\Sigma}u-Hu_\nu \right) -\gamma u^{-1}
\left\langle \nabla_{\Sigma}u, \nabla_{\Sigma}\left(u^{-\gamma}\right) \right\rangle\\
&\quad+Hu^{-\gamma}\left(H+\gamma u^{-1}u_\nu\right) \\
={}& \int_{\Sigma} -\Ric(\nu,\nu)u^{-\gamma}
+\gamma u^{-\gamma-1}\Delta u +\left[\gamma(-\gamma-1)+\gamma^2\right] \left\vert\nabla_{\Sigma}u\right\vert^2u^{-\gamma-2}\\
&\quad -\Vert\Pi\Vert^2u^{-\gamma} -\gamma u^{-\gamma-2}u_\nu^2 -\gamma H u^{-\gamma-1}u_\nu
+H u^{-\gamma}\left(H+\gamma u^{-1}u_\nu\right)
\end{aligned}
\]
where $H$ is the mean curvature of $\Sigma=\partial E$, $u_\nu \coloneqq\langle \nabla u, \nu\rangle$ and $\Pi$ denotes the second fundamental form. We can estimate the second variation as:
\[\begin{aligned}
\frac{d^2 A}{dt^2}(0) \leq{}& \int_{\Sigma} -2u^{-\gamma} -\gamma\vert \nabla_{\Sigma}u\vert^2u^{-\gamma-2}- \dfrac{u^{-\gamma}}{2}H^2 - \gamma u^{-\gamma-2}u^2_\nu\\
&- \gamma H u^{-\gamma-1}u_\nu + Hu^{-\gamma}(H+\gamma u^{-1}u_\nu).\\
={}& \int_\Sigma -2u^{-\gamma}-\gamma \vert \nabla_\Sigma u\vert^2 u^{-\gamma-2}+\dfrac{u^{-\gamma}}{2}H^2 -\gamma u^{-\gamma-2}u^2_\nu.
\end{aligned}\]
Because $t \mapsto V(t)$ is strictly monotone in a neighborhood of $0$, we can interpret $t\mapsto A(t)$ as a function in $V$ which is defined in a neighborhood of $v_0$. In this case we have $A(v_0)= \int_\Sigma u^\gamma=I(v_0)$ and $I(v)\leq A(v)$ for $v$ close to $v_0$. Since $I$ is smooth we have:
\begin{equation}\label{properties A(v0)}
    A'(v_0)=I'(v_0) \qquad \text{and} \qquad A''(v_0)\geq I''(v_0).
\end{equation}
Since $E$ minimizes the weighted perimeter under a fixed weighted volume constraint, the first variation implies that $H+\gamma u^{-1}u_\nu$ is constant on $\Sigma$. The chain rule then identifies this constant with $A'(v_0)$. Thus, setting $X\coloneqq A'(v_0)$ and $Y:=u^{-1}u_\nu$, we have $H=X-\gamma Y$. With this, the second variation estimate can be rewritten as follows:
\begin{align}
\frac{d^2 A}{dt^2}(0)&\leq \int_{\Sigma} -2u^{-\gamma} -\gamma\vert \nabla_{\Sigma}u\vert^2u^{-\gamma-2}+ \dfrac{u^{-\gamma}}{2}H^2 - \gamma u^{-\gamma}Y^2 \nonumber \\ 
&=\int_{\Sigma}-2u^{-\gamma} - \gamma\vert \nabla_{\Sigma}u\vert^2u^{-\gamma-2} +u^{-\gamma}\left[\gamma\left(\dfrac{\gamma}{2}-1 \right)Y^2 + \dfrac{X^2}{2} - \gamma XY\right] \label{second variation area estimate}.
\end{align}
Using the chain rule to evaluate $A''(v_0)$, we see that for $Q \coloneqq \int_{\Sigma} 1 = \vert \Sigma \vert = \frac{dV}{dt}(0)$,
\[A''(v_0)= Q^{-2}\frac{d^2A}{dt^2}(0) -Q^{-3}\frac{dA}{dt}(0) \ \frac{d^2V}{dt^2}(0)\] and so \[Q^2 A''(v_0) = \frac{d^2A}{dt^2}(0) - X \ \frac{d^2V}{dt^2}(0).\] Notice that the second term is actually:
\begin{align*}
    -X \ \dfrac{d^2V}{dt^2}(0) &= -X \int_\Sigma \left[(H+\gamma Y)u^{-\gamma} + (u^{-\gamma})_\nu\right] = -X \int_\Sigma Hu^{-\gamma}\\
    &= -X\int_\Sigma(X-\gamma Y)u^{-\gamma} = -X^2\int_\Sigma u^{-\gamma} +\gamma X \int_\Sigma Yu^{-\gamma}
\end{align*}
and combining with \eqref{second variation area estimate} we obtain:
\begin{align}
Q^2A''(v_0) \leq{}& -\left(\frac{A'(v_0)^2}{2}+2
\right) \int_\Sigma u^{-\gamma} \label{Ru(M) inequalityefined-AX}-\gamma \int_\Sigma u^{-\gamma} \vert\nabla_\Sigma\log u\vert^2 \nonumber\\
&- \gamma\left(1-\frac{\gamma}{2}\right)\int_\Sigma u^{-\gamma} \left(\frac{u_\nu}{u}\right)^2. \nonumber
\end{align}
By Hölder inequality,
\[Q^2 = \left(\int_\Sigma 1\right)^2 \leq \left(\int_\Sigma u^\gamma\right) \left(\int_\Sigma u^{-\gamma}\right) = A(v_0)\int_\Sigma u^{-\gamma}\]
and so \[-\dfrac{A(v_0)}{Q^2}  \leq -\left( \int_\Sigma u^{-\gamma}\right)^{-1}.\]
We then deduce that
\begin{align}
A(v_0)A''(v_0) \leq{}& -\frac{1}{2}A'(v_0)^2-2 \label{Ru(M) inequalityefined-AX-final} -\frac{A(v_0)}{Q^2}\gamma \int_\Sigma u^{-\gamma} \vert\nabla_\Sigma\log u\vert^2 \nonumber\\
&-\frac{A(v_0)}{Q^2} \gamma\left(1-\frac{\gamma}{2}\right) \int_\Sigma u^{-\gamma} \left(\frac{u_\nu}{u}\right)^2. \nonumber
\end{align}
On the other hand, by \eqref{viscosity equality} and
\eqref{properties A(v0)} we have
\[-\frac{1}{2}A'(v_0)^2-2 = -\frac{1}{2}I'(v_0)^2-2 = I''(v_0)I(v_0) \leq A''(v_0)A(v_0)\] and this yields \[ 0 \leq - \frac{I(v_0)}{Q^2} \gamma \int_\Sigma u^{-\gamma} \vert\nabla_\Sigma\log u\vert^2 - \frac{I(v_0)}{Q^2} \gamma \left(1-\frac{\gamma}{2}\right) \int_\Sigma u^{-\gamma} \left(\frac{u_\nu}{u}\right)^2 \leq 0.\]
Hence both integrals vanish and we have $\nabla_\Sigma u=0$ and $u_\nu=0$ on $\Sigma$. Thus for almost every $r\in(0,\pi)$
\begin{equation}\label{gradient zero}
    \nabla u=0 \qquad\text{on }\partial B_r(p).
\end{equation}
Finally we apply the coarea formula once more to the distance function $d_p$ together with \eqref{gradient zero} to obtain
\[ \int_M\vert \nabla u\vert\ dV_g = \int_0^\pi \left(\int_{\{d_p=r\}} \vert\nabla u\vert \ d\mathcal H_g^2\right)dr=0.\] Hence $\nabla u=0$  almost everywhere on $M$ and since $u$ is smooth, $\nabla u\equiv0$ on $M$. As $M$ is connected, $u$ is constant. The hypothesis $\gamma\Delta_g u \leq \rho_g u - 2u$ then reduces to $\rho_g \geq 2$ and hence $\Ric_g\geq2g.$ Together with $\diam(M,g)=\pi$ the classical maximal diameter theorem of Cheng \cite{Cheng1975} yields that $(M^3,g)$ is isometric to $(\mathbb S^3,g_{\mathbb S^3})$. Conversely, the unit round sphere satisfies $\rho_{g_{\mathbb S^3}}\equiv2$, $\lambda_1(- \gamma\Delta_{g_{\mathbb S^3}}+\rho_{g_{\mathbb S^3}})=2$ and $\diam(\mathbb S^3)=\pi.$

\begin{rmk}\label{remark regularity}
    We justify here why we can assume that the function $u$ in the proof of Theorem \ref{Spectral Cheng} is smooth. Let $u \in C^{2,\alpha}(M)$ be a positive function such that $-\gamma \Delta_g u + \rho_g u \geq 2Ku$. First we claim that the upper bound $\frac{\pi}{\sqrt{K}}$ for the diameter holds. Take a sequence of positive smooth functions $(u_j)_j$ such that $u_j \rightarrow u$ in $C^2(M)$. Hence $-\gamma \Delta_g u_j + \rho_g u_j \rightarrow -\gamma \Delta_g u + \rho_g u$ uniformly. Thus there exists $\varepsilon_j \rightarrow 0$ such that $-\gamma \Delta_g u_j + \rho_g u_j \geq 2(K-\varepsilon_j)u_j$. For each $u_j$ we can apply the estimate already proved for smooth functions with $K_j \coloneqq K-\varepsilon_j$ to obtain \[\diam(M,g) \leq \dfrac{\pi}{\sqrt{K-\varepsilon_j}}.\] Taking the limit the claim follows. Now we prove the rigidity statement. Suppose $\diam(M,g) = \frac{\pi}{\sqrt{K}}$. Choose $\gamma <\beta <2$ and set 
    \[\lambda_\beta \coloneqq \sup \{C \colon \text{ there exists } w \in C^{2,\alpha}(M) \text{ such that } w>0 \text{ and } -\beta\Delta_gw + \rho_g w \geq C w \}.\] By hypothesis, for every smooth function with compact support $\varphi$ we have \[\int_M \gamma \vert \nabla \varphi \vert^2+\rho_g \varphi^2 \geq 2K \int_M \varphi^2. \] Since $\beta >\gamma,$ it follows that $\lambda_\beta \geq 2K$. Let $w>0$ be a $C^{2,\alpha}(M)$ function that satisfies $-\beta \Delta_g w + \rho_g w = \lambda_\beta w$ (for example, pick a positive first eigenfunction of the operator $-\beta \Delta_g + \rho_g$. The Hölder regularity is due to the fact that $\rho_g$ is in general only locally Lipschitz). By what we proved in the beginning of this remark, we have that: \[\frac{\pi}{\sqrt{K}} = \diam(M,g) \leq \pi \sqrt{\frac{2}{\lambda_\beta}} \leq \frac{\pi}{\sqrt{K}}\] and so $\lambda_\beta = 2K.$ Therefore  
    \begin{align*}
        2K \int_M w^2 = \int_M(\beta \vert \nabla w\vert^2 + \rho_g w^2) &=  \int_M(\gamma \vert \nabla w\vert^2 + \rho_g w^2) + (\beta-\gamma)\int_M \vert \nabla w \vert^2\\
        &\geq 2K \int_M w^2 + (\beta-\gamma)\int_M \vert \nabla w \vert^2.
    \end{align*}
    Since $\beta > \gamma$ we conclude that $w$ is constant. This implies that \[2K w = \lambda_\beta w = -\beta \Delta_g w + \rho_g w = \rho_g w\] and $\rho_g \equiv 2K$. The rigidity then follows. We can conclude that $u$ is constant as well. Indeed, since $-\gamma \Delta_g u +\rho_g u \geq 2Ku$ and $\rho_g \equiv 2K$, then $\Delta_g u \leq 0$. Since $M$ is compact, $u$ is a constant function.
\end{rmk}

\section{Sharpness of the range of $\gamma$} \label{section range gamma}

In this section we prove that Theorem \ref{Spectral Cheng} fails if we assume $\gamma = 2$, that is, the range $\gamma \in (0,2)$ is sharp.

\begin{prop}\label{sharpness for gamma}
There exists a one parameter family of smooth metrics $g_\varepsilon$ on $\mathbb S^3$, with $0<\varepsilon<1$, such that \[ \lambda_1(-2\Delta_{g_\varepsilon}+\rho_{g_\varepsilon})\geq2 \qquad \text{and}  \qquad \text{diam}(\mathbb S^3,g_\varepsilon)=\pi\]
but $(\mathbb S^3, g_\varepsilon)$ is not isometric to the round $3$-sphere.
\end{prop}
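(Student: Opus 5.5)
The plan is to look for the metrics among rotationally symmetric ones,
\[
g_\varepsilon = dr^2 + f_\varepsilon(r)^2\, g_{\mathbb S^2}, \qquad r\in[0,\pi],
\]
with a radial test function $u_\varepsilon=u_\varepsilon(r)>0$. I take $f_\varepsilon$ odd about $r=0$ and $r=\pi$, with $f_\varepsilon(0)=f_\varepsilon(\pi)=0$, $f_\varepsilon'(0)=1$ and $f_\varepsilon'(\pi)=-1$, so that $g_\varepsilon$ closes up smoothly at the two poles. The construction is guided by the proof of Theorem \ref{Spectral Cheng}. At $\gamma=2$ the coefficient $\gamma(1-\frac{\gamma}{2})$ of the term $\int_\Sigma u^{-\gamma}(u_\nu/u)^2$ vanishes, so geodesic spheres about a pole may be weighted isoperimetric even though $u_\nu\neq 0$. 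It is therefore natural to let $u$ depend on $r$ only, so that $\nabla_\Sigma u=0$, and to let the slices be umbilic.

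The diameter is the easy part. The two poles are at distance exactly $\pi$, since the radial segment has length $\pi$ and any curve joining them has length at least the total change in $r$. Conversely, for points with radii $r_1,r_2$, going through the pole at $r=0$ or through the one at $r=\pi$ gives
\[
d\le \min\{r_1+r_2,\ 2\pi-r_1-r_2\}\le\pi .
\]
Hence $\diam(\mathbb S^3,g_\varepsilon)=\pi$ for every admissible $f_\varepsilon$.

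For the spectral condition I use the warped-product formulas. The Ricci curvature in the radial direction is $-2f''/f$, and in the tangential directions it is $-f''/f+(1-f'^2)/f^2$. The radial Laplacian is $\Delta u=u''+2\frac{f'}{f}u'$. I prescribe $u_\varepsilon=1+\varepsilon\psi(r)$, where $\psi$ is a smooth bump supported away from the poles, possibly symmetric under $r\mapsto\pi-r$. I then determine $f$ by requiring equality with the radial Ricci term:
\[
-2\Big(u''+2\frac{f'}{f}u'\Big)-2\frac{f''}{f}u=2u,
\qquad\text{i.e.}\qquad
f''+\frac{2u'}{u}f'+\Big(1+\frac{u''}{u}\Big)f=0 .
\]
This is a linear second-order ODE for $f$, with initial data $f(0)=0$ and $f'(0)=1$. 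Near the poles $u\equiv1$, so $f=\sin r$ there and smoothness at $r=0$ is automatic. It then remains to verify two things:
\begin{itemize}
\item the closing condition: $f$ returns to $\sin r$ near $r=\pi$ (for instance by choosing $\psi$ so that the perturbation of $\sin r$ cancels, using a Wronskian/Fredholm-type condition against the two solutions $\sin r,\cos r$ of the unperturbed equation);
\item the tangential Ricci curvature is at least the radial one, i.e. $-f''/f+(1-f'^2)/f^2\ge -2f''/f$, which is equivalent to $ff''+1-f'^2\ge0$.
\end{itemize}
When both hold, $\rho_{g_\varepsilon}=-2f''/f$. The equation above then says exactly that $-2\Delta u+\rho u=2u$ with $u>0$, so $\lambda_1(-2\Delta_{g_\varepsilon}+\rho_{g_\varepsilon})\ge 2$. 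Non-roundness follows because $f\not\equiv\sin r$ wherever $\psi'\neq0$, so the sectional curvature is not constant.

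The main obstacle is the simultaneous closing condition and the inequality $ff''+1-f'^2\ge0$. For $f=\sin r$ this quantity vanishes identically, so the sign is decided at first order in $\varepsilon$. Writing $f=\sin r+\varepsilon h+O(\varepsilon^2)$, I first compute the first-order variation of $ff''+1-f'^2$ in terms of $h$ and $\psi$. I then choose $\psi$ (for example one-signed, and symmetric about $r=\pi/2$) so that this first-order term is nonnegative and the resulting $h$ is compactly supported. If the perturbative sign cannot be arranged, the fallback is to reverse the roles: prescribe $f$ with $ff''+1-f'^2\ge0$ (a "less curved" tangential direction), and solve the first-order linear ODE for $\log u$ obtained from the same equation, checking that $u$ stays positive and smooth at the poles.
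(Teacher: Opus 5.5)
There is a genuine gap. The ansatz of your main plan, $u_\varepsilon=1+\varepsilon\psi$ with $\psi$ supported away from the poles, can never satisfy the inequality you single out as the crux.

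First, your ODE is just $(fu)''+fu=0$. With $f(0)=0$, $f'(0)=1$, $u(0)=1$ it forces $fu=\sin r$. So the closing condition is automatic and no Fredholm condition arises; this identity $a_\varepsilon u_\varepsilon=\sin t$ is exactly what the paper uses. Writing $f=e^{-\ell}\sin r$ with $\ell=\log u$, one finds
\[
e^{2\ell}\bigl(ff''+1-(f')^2\bigr)=e^{2\ell}-1-\sin^2 r\,\ell'',
\]
so you need $\sin^2 r\,\ell''\le e^{2\ell}-1$. To first order this reads $\sin^2 r\,\psi''\le 2\psi$. But $\eta(r)=1-r\cot r$ is positive on $(0,\pi)$ and solves $\eta''=2\eta/\sin^2 r$, so integrating by parts gives
\[
\int_0^\pi \frac{\eta}{\sin^2 r}\bigl(2\psi-\sin^2 r\,\psi''\bigr)\,dr=0\qquad\text{for all }\psi\in C^\infty_c(0,\pi).
\]
Hence the first-order term is strictly negative somewhere unless $\psi\equiv0$.

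Larger perturbations do not help either. Where $\ell<0$, the inequality makes $\ell$ strictly concave, so no component of $\{\ell<0\}$ can be bounded by zeros of $\ell$; therefore $\ell\ge0$. Then $\ell''\le C\ell$ near $\inf(\mathrm{supp}\,\ell)$ with vanishing Cauchy data, and Gronwall gives $\ell\equiv0$. So no $u\not\equiv1$ that equals $1$ near the poles can work, whatever the size of the perturbation.

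Your fallback points in the right direction: prescribe $f$ and take $u=\sin r/f$. Note that the equation is linear in $fu$; it is not a first-order linear ODE for $\log u$. But the fallback is left unexecuted, and by the above $f$ must differ from $\sin r$ arbitrarily close to both poles. The missing ingredient is an explicit choice, such as the paper's $f=\sin r\,(1-\varepsilon\sin^2 r)$ and $u=(1-\varepsilon\sin^2 r)^{-1}$. For these, $ff''+1-(f')^2=\varepsilon(4-3\varepsilon)\sin^4 r>0$ on $(0,\pi)$. This shows both that $\rho_{g_\varepsilon}$ is the radial eigenvalue and that $g_\varepsilon$ is not Einstein, hence not round.

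Your diameter argument through the two poles is correct. It is slightly more general than the paper's comparison $g_\varepsilon\le g_{\mathbb S^3}$, since it needs no bound $f\le\sin r$.
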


\begin{proof}
Fix $\varepsilon\in(0,1)$ and consider the rotationally symmetric metric \[ g_\varepsilon \coloneqq dt^2+a_\varepsilon(t)^2g_{\mathbb S^2}\] for $t \in [0,\pi]$, where $a_\varepsilon(t) \coloneqq \sin t\left(1-\varepsilon\sin^2t\right)$ and $g_{\mathbb S^2}$ denotes the round metric. Since $0<\varepsilon<1$, one has $a_\varepsilon(t)>0$ for $t\in(0,\pi)$ and moreover $a_\varepsilon(0)=a_\varepsilon(\pi)=0$, $a_\varepsilon'(0)=1$ and $a_\varepsilon'(\pi)=-1.$ Along with the fact that all even derivatives of $a_\varepsilon$ at the endpoints are zero, $g_\varepsilon$ extends smoothly to a Riemannian metric on $\mathbb S^3$. We first determine the smallest Ricci eigenvalue for these metrics. In order to do it, notice that in a general warped product metric \[ h=dt^2+a(t)^2g_{\mathbb S^2}\] the Ricci curvature behaves as follows: if $\partial t$ denotes the radial direction and $X$ is any unit vector tangent to the $\mathbb S^2$-factor,
\[\Ric(\partial_t,\partial_t) = -2\frac{a''}{a}, \qquad \Ric(X,X) = \frac{1-(a')^2-aa''}{a^2} \qquad\text{and} \qquad \Ric(X,\partial t) = 0.\] Therefore \[\Ric(X,X) - \Ric(\partial_t,\partial_t) = \frac{1-(a')^2+aa''}{a^2}.\]
In particular for $a=a_\varepsilon$, a direct computation gives
\[1-(a_\varepsilon')^2+a_\varepsilon a_\varepsilon'' = \varepsilon(4-3\varepsilon)\sin^4t.\]
Since $0<\varepsilon<1$, we have $\Ric(X,X) > \Ric(\partial_t,\partial_t)$ for every $t\in(0,\pi)$ and it follows that the smallest eigenvalue of the Ricci tensor is the radial one: \[\rho_{g_\varepsilon} = -2\frac{a_\varepsilon''}{a_\varepsilon}.\]
We now introduce the function \[u_\varepsilon(t) \coloneqq \frac{1}{1-\varepsilon\sin^2t}.\] This defines a smooth positive function on $\mathbb S^3$ which is non constant whenever $\varepsilon$ is not zero. For an arbitrary radial function $u=u(t)$, one has in the rotationally symmetric metric $h$: \[\Delta_h u= u'' + 2\frac{a'u'}{a} = \frac{(a^2u')'}{a^2}.\] Set $v\coloneqq au.$ We then have \[a^2u' =av'-a'v\] and hence \[\frac{\Delta_hu}{u} = \frac{v''}{v} - \frac{a''}{a}.\] Therefore we obtain the identity \[ -2\frac{\Delta_{g_\varepsilon}u_\varepsilon}{u_\varepsilon}+\rho_{g_\varepsilon} = -2\left(\frac{v_{\varepsilon}''}{v_\varepsilon} - \frac{a_{\varepsilon}''}{a_\varepsilon}\right) - 2\frac{a_\varepsilon''}{a_\varepsilon}= -2\frac{v_\varepsilon''}{v_\varepsilon}\] where the corresponding $v_\varepsilon(t)=\sin t$ by the choices of $a_\varepsilon$ and $u_\varepsilon$. This shows that \[-2\Delta_{g_\varepsilon}u_\varepsilon +\rho_{g_\varepsilon}u_\varepsilon= 2u_\varepsilon.\] Therefore $\lambda_1(-2\Delta_{g_\varepsilon}+\rho_{g_\varepsilon}) \geq2.$ It remains to compute the diameter of $(\mathbb S^3, g_\varepsilon)$. Note that as $a_\varepsilon(t) = \sin t\left(1-\varepsilon\sin^2t\right) \leq \sin t$, then the metrics satisfy \[ g_\varepsilon = dt^2+a_\varepsilon(t)^2g_{\mathbb S^2} \leq dt^2+\sin^2t\ g_{\mathbb S^2} = g_{\mathbb S^3}\] as quadratic forms, where $g_{\mathbb S^3}$ denotes the round metric in $\mathbb S^3$. It follows that \[d_{g_\varepsilon}(x,y) \leq d_{g_{\mathbb S^3}}(x,y) \leq \pi \] for every $x,y\in\mathbb S^3$. Hence $ \text{diam}(\mathbb S^3,g_\varepsilon)\leq \pi$. On the other hand, fixing $\theta \in S^2$, the curves $t \in [0,\pi] \mapsto (t,\theta)$ have length $\pi$ and every curve joining the north and south pole has length at least $\pi$. This gives the equality $ \text{diam}(\mathbb S^3,g_\varepsilon)=\pi $ as desired.
\end{proof}

\section{Discussion of the Case $n\neq3$} \label{section case n>3}

Recently Hong-Wang \cite{HongWang} obtained an explicit sharp uniform diameter estimate under spectral Ricci lower bounds in higher dimensions which refines the uniform estimate previously established by Antonelli-Xu \cite{AntonelliXuBG}. More precisely, let $(M^n,g)$ be a complete Riemannian manifold with dimension $n\geq 3$ and suppose that there exists a positive function $u$ such that \[-\gamma\Delta_g u+\rho_g u\geq (n-1)K u\] where $K>0$ and $0<\gamma<\frac{4}{n-1}.$ Theorem 1.3 in \cite{HongWang} says that under these conditions,
\begin{equation}\label{HW diameter}
    \diam(M,g) \leq 2\pi \sqrt{\frac{(n-1)-(n-2)\gamma}{(n-1)\bigl(4-(n-1)\gamma\bigr)K}} =: D_{n,\gamma,K}.
\end{equation}
Moreover, the constant in \eqref{HW diameter} is optimal among smooth closed examples. In particular, taking $K=1$ and defining $D_{n,\gamma} \coloneqq D_{n,\gamma,1}$ the result gives that the condition \[\lambda_1(-\gamma\Delta_g+\rho_g)\geq n-1\] implies $\diam(M,g)\leq D_{n,\gamma}.$ Notice that the case $n=3$ gives $D_{3,\gamma}=\pi$ for every $0<\gamma<2$. Due to our Theorem \ref{Spectral Cheng}, it makes natural in any dimension to ask whether the equality case admits a spectral analogue of Cheng's maximal diameter theorem. However the situation changes completely if $n>3$. A direct computation gives
\begin{equation}\label{D square}
    \frac{D_{n,\gamma}^2}{\pi^2}-1 = \frac{\gamma(n-3)^2}{(n-1)\bigl(4-(n-1)\gamma\bigr)}
\end{equation}
which shows that $D_{n,\gamma}>\pi$ whenever $n>3$ and $0<\gamma<\frac{4}{n-1}$. Since there are examples of closed $n$-manifolds with diameter almost attaining $D_{n,\gamma}$ (see Section 3.3 in \cite{HongWang}), in higher dimensions the natural spectral lower bound $\lambda_1(-\gamma\Delta_g+\rho_g)\geq n-1$ does not even imply the classical estimate $\diam(M)\leq\pi$. One could try to repair this by changing the spectral lower bound. Indeed, from \eqref{HW diameter} the choice
\begin{equation}\label{LambdaNGamma}
    \Lambda_{n,\gamma} \coloneqq \frac{4\bigl((n-1)-(n-2)\gamma\bigr)}{4-(n-1)\gamma}
\end{equation}
gives that the condition $\lambda_1(-\gamma\Delta_g+\rho_g) \geq\Lambda_{n,\gamma}$ ensures the bound $\diam(M,g)\leq\pi.$ However, \[\Lambda_{n,\gamma}-(n-1) = \frac{\gamma(n-3)^2}{4-(n-1)\gamma}.\] Hence for $n>3$ and $0<\gamma<\frac{4}{n-1}$, we have $\Lambda_{n,\gamma}>n-1.$ But the unit round sphere does not satisfy this strengthened spectral hypothesis because $\rho_{\mathbb S^n} \equiv n-1$ and any constant function $u_0$ satisfies $-\gamma \Delta_{\mathbb S^n}u_0 + \rho_{\mathbb S^n} u_0 = (n-1)u_0$. Therefore \eqref{LambdaNGamma} cannot provide a spectral Cheng theorem having the unit round sphere as the equality model. In fact, in dimensions $n>3$ no constant lower bound for $\lambda_1(-\gamma\Delta+\rho)$ yields a sharp diameter estimate  $\diam(M,g)\leq\pi$ with equality attained precisely by the unit round sphere. 

\begin{prop}\label{no spectral Cheng for higher dimensions}
Let $n>3$ and $0<\gamma<\frac{4}{n-1}$. There is no constant $C\in\R$ such that every closed Riemannian manifold $(M^n,g)$ satisfying $\lambda_1(-\gamma\Delta_g+\rho_g)\geq C$ also satisfies $\diam(M,g)\leq\pi$ with equality attained if and only if $(M^n,g)$ is isometric to the unit round sphere
$(\mathbb S^n,g_{\mathbb S^n})$.
\end{prop}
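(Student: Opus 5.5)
We argue by contradiction: suppose some constant $C\in\R$ has the stated property, and split into the cases $C>n-1$ and $C\le n-1$.

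\textbf{Case $C>n-1$.} The round sphere must belong to the admissible class, since it is supposed to realize equality. We show it does not. Because $M=\mathbb S^n$ is closed, $\lambda_1(-\gamma\Delta+\rho)$ equals the first eigenvalue of the operator, and hence is at most the Rayleigh quotient of any test function. Taking $\phi\equiv 1$ and using $\rho_{\mathbb S^n}\equiv n-1$ gives $\lambda_1\le n-1<C$. Equivalently, if $u>0$ satisfied $-\gamma\Delta u+(n-1)u\ge Cu$, then integrating over $\mathbb S^n$ would give $(n-1)\int u\ge C\int u$, which is impossible. So the unit sphere violates the hypothesis, and the "if" direction of the equality characterization fails.

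\textbf{Case $C\le n-1$.} Any manifold with $\lambda_1\ge n-1$ also satisfies $\lambda_1\ge C$. By the sharpness part of the Hong--Wang estimate \eqref{HW diameter}, there are closed $n$-manifolds with $\lambda_1(-\gamma\Delta_g+\rho_g)\ge n-1$ whose diameters come arbitrarily close to $D_{n,\gamma}$ (Section 3.3 of \cite{HongWang}). Identity \eqref{D square} shows $D_{n,\gamma}>\pi$ when $n>3$ and $0<\gamma<\frac{4}{n-1}$. Choosing such an example with $\diam>\pi$ contradicts the assertion that every admissible manifold has $\diam\le\pi$. This also covers the trivial observation that very negative $C$ admits huge manifolds.

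\textbf{Main obstacle.} The second case rests on the Hong--Wang examples, so we must check their hypotheses. They need to be smooth closed manifolds satisfying the spectral bound with constant $n-1$ (that is, $K=1$) and achieving diameter strictly greater than $\pi$, not only in the limit. Since $D_{n,\gamma}>\pi$ strictly, it suffices that the examples approximate $D_{n,\gamma}$. If they are only stated for a general $K$, scaling the metric by a constant fixes the normalization, because $\lambda_1$ scales like $K$ and the diameter like $K^{-1/2}$.
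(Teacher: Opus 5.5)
Your proof is correct, and it differs from the paper's only in how the final contradiction is reached.

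\textbf{Common first step.} Your Case 1 is the paper's opening move. The paper also reads the equality clause as requiring the round sphere to be an admissible equality model. It tests constants on $\mathbb S^n$ to conclude $C\le n-1$.

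\textbf{Where you differ.} You take a Hong--Wang example with $\diam>\pi$ and contradict the inequality $\diam\le\pi$ directly, with no rescaling. The paper instead rescales each example by $c_j^2=(\pi/d_j)^2$, so that $\diam(\mathbb S^n,\hat g_j)=\pi$ exactly. It notes that the spectral bound improves to $(d_j/\pi)^2(n-1)>n-1\ge C$. It then contradicts the rigidity clause, since these are non-round admissible manifolds of diameter exactly $\pi$.

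\textbf{What each route buys.}
\begin{itemize}
\item Your route is shorter and needs nothing beyond $D_{n,\gamma}>\pi$.
\item The paper's route shows that the Cheng-type rigidity itself fails, even among admissible manifolds that do satisfy $\diam\le\pi$. This parallels the $\gamma=2$ counterexamples of Section 4. It would also survive a weaker formulation that asks only for rigidity at diameter $\pi$, which your argument would not contradict.
\item The rescaled metrics give non-round diameter-$\pi$ examples for every $C<\Lambda_{n,\gamma}$, because $(d_j/\pi)^2(n-1)\to\Lambda_{n,\gamma}$. So part of your Case 1 range can be handled without the round-sphere reading.
\end{itemize}
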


\begin{proof}
Suppose by contradiction that there exists such a constant $C$. Then for the round sphere $\mathbb S^n$, since $\rho_{g_{\mathbb S^n}}\equiv n-1$ and the constant functions are the first eigenfunctions, we must have $n-1\geq C.$ On the other hand, the optimality construction of Hong-Wang \cite[Proposition 3.4]{HongWang} provides a sequence of smooth rotationally symmetric metrics $g_j$ on $\mathbb S^n$, together with positive smooth functions $u_j$,
such that\[-\gamma \Delta_{g_j}u_j+\rho_{g_j}u_j  \geq (n-1)u_j\] and \[d_j\coloneqq\diam(S^n,g_j) \xrightarrow{\ j \rightarrow +\infty \ }{} D_{n,\gamma}.\] Since $D_{n,\gamma}>\pi$ by \eqref{D square}, for all sufficiently large $j$ one has $d_j>\pi.$ Set $c_j\coloneqq\frac{\pi}{d_j}<1$ and $\hat g_j\coloneqq c_j^2g_j$. Then $\diam(S^n,\hat g_j)=\pi.$ Under this constant rescaling of the metric, the operator $-\gamma\Delta+\rho$ scales by the inverse square factor and thus \[-\gamma\Delta_{\hat g_j} u_j + \rho_{\hat g_j} u_j = c_j^{-2}(-\gamma \Delta_{g_j} u_j +\rho_{g_j} u_j) \geq \left(\dfrac{d_j}{\pi}\right)^2(n-1)u_j\] We then obtain \[\lambda_1 \bigl(-\gamma\Delta_{\hat g_j} +\rho_{\hat g_j} \bigr)\geq \left(\dfrac{d_j}{\pi}\right)^2(n-1) > n-1\geq C.\] Thus $(\mathbb S^n,\hat g_j)$ satisfies the assumed spectral lower bound and has diameter exactly $\pi$, but it is not isometric to the unit round sphere. This contradicts the proposed rigidity statement.
\end{proof}

For the case $n=2$ the Ricci tensor of a Riemannian surface $(M^2,g)$ satisfies $\Ric_g = K_g \ g$ where $K_g$ is the Gaussian curvature of $M$. Hence $\rho_g = K_g$ and it is already known that the spectral condition $\lambda_1(-\gamma\Delta + K) \geq 1$ gives a diameter bound:

\begin{teo}[Theorem 1.4 in \cite{Xu2023}] \label{diam n=2}
    Given $0<\gamma < 4$, if a complete surface $(M^2,g)$ satisfies $\lambda_1(-\gamma\Delta_g + K_g) \geq 1$, then \[\diam(M,g) \leq \dfrac{2\pi}{\sqrt{(4-\gamma)}}.\] In particular $M$ must be a compact surface.
\end{teo}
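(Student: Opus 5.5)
The plan is to repeat the argument of Section \ref{section proof} in dimension two, with the weighted measure $d\mu=u^\gamma\,dV_g$ and the weighted perimeter $P_\mu$, where $u>0$ satisfies $-\gamma\Delta_g u+K_g u\ge u$. The main work is to recompute the second variation for curves, which changes the constants in the profile inequality. I would first treat the case where $M$ is closed, and then deal with completeness separately.

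\textbf{Closed case: the profile inequality.} Let $\Sigma=\partial E$ be the boundary of a weighted isoperimetric region. In dimension two this is a union of smooth curves. I take the normal speed $\varphi=u^{-\gamma}$ as in Section \ref{section proof}. For a curve we have $\Vert\Pi\Vert^2=H^2$ (instead of $\Vert\Pi\Vert^2\ge H^2/2$) and $\Ric(\nu,\nu)=K_g$. Using $\gamma\Delta u\le (K_g-1)u$, the curvature terms collapse to $-u^{-\gamma}$, and the $H^2$ and $HY$ terms cancel, where $Y=u_\nu/u$. This gives
\[\frac{d^2A}{dt^2}(0)\le\int_\Sigma u^{-\gamma}\bigl(-1-\gamma Y^2\bigr)-\gamma\int_\Sigma u^{-\gamma-2}\vert\nabla_\Sigma u\vert^2 .\]
The first variation gives $H=X-\gamma Y$ with $X=A'(v_0)$. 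Subtracting $X\,\frac{d^2V}{dt^2}(0)=X\int_\Sigma(X-\gamma Y)u^{-\gamma}$ yields the integrand $u^{-\gamma}\bigl[-1-X^2+\gamma XY-\gamma Y^2\bigr]$. Completing the square, $\gamma XY-\gamma Y^2\le \frac{\gamma}{4}X^2$. The same Hölder step $Q^2\le A\int_\Sigma u^{-\gamma}$ as in Section \ref{section proof} then gives, in the viscosity sense on $(0,V)$,
\[ I''I\le -\Bigl(1-\frac{\gamma}{4}\Bigr)(I')^2-1 .\]
The coefficient $1-\frac{\gamma}{4}$ is positive precisely because $\gamma<4$, which is where that hypothesis enters.

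\textbf{Closed case: from the profile to the diameter.} I would generalize Lemma \ref{lemma bound on L} to the inequality $I''I\le -a(I')^2-b$ with $a,b>0$. Setting $q(r)=a\,p(v(r))$ gives
\[ q'=a\,I\,p'\le -q^2-ab \]
almost everywhere, and the arctangent argument then yields
\[ L\le \frac{\pi}{\sqrt{ab}}=\frac{2\pi}{\sqrt{4-\gamma}} .\]
Integrability of $1/I$ follows as in Lemma \ref{lemma integrability}, now from the two-dimensional bound $I(v)\ge c\min\{v,V-v\}^{1/2}$. The coarea comparison \eqref{Ru(M) inequality} for geodesic balls then gives $\diam(M,g)\le L$, which proves the estimate for closed surfaces.

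\textbf{Main obstacle: the complete noncompact case.} The statement allows complete surfaces, and the claim that $M$ must be compact is the step I expect to be hardest. The difficulty is that when $M$ is noncompact, isoperimetric minimizers need not exist and $\mu(M)$ may be infinite. My first attempt would be to argue by contradiction. Suppose there are points $x,y$ with $d(x,y)>D:=2\pi/\sqrt{4-\gamma}$. I would then run the argument on a large compact region containing a minimizing geodesic between them, using a relative weighted profile, or use a warped $\mu$-bubble in an annulus around $x$ of width greater than $D$, with weight $u^\gamma$ and the prescribing function $h$ solving the comparison ODE $h'=-h^2-ab$ (rescaled as above). The ODE blows up within width $D$, which would contradict the existence of the bubble. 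This shows every point is within distance $D$ of $x$, so $M$ is bounded and hence compact by completeness, and the closed-case argument applies. The bookkeeping for the $\mu$-bubble second variation should mirror the computation above, with $X$ replaced by $h$.
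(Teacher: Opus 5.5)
The paper gives no proof of this statement; it is quoted from \cite{Xu2023}, so your argument has to stand on its own. For \emph{closed} surfaces it does. It is the machinery of Section~\ref{section proof} run in dimension two, and the constants check out. With $\Vert\Pi\Vert^2=H^2$ and $-K_g+\gamma u^{-1}\Delta_g u\le -1$, the integrand after subtracting $X\,\frac{d^2V}{dt^2}(0)$ is indeed $u^{-\gamma}\bigl[-1-X^2+\gamma XY-\gamma Y^2\bigr]$. Completing the square gives $I''I\le-\bigl(1-\frac{\gamma}{4}\bigr)(I')^2-1$, and the arctangent argument with $q=a\,p$ gives $L\le\pi/\sqrt{ab}=2\pi/\sqrt{4-\gamma}$.

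The genuine gap is the complete noncompact case, which is exactly what the clause ``$M$ must be compact'' asserts. The profile method needs $\mu(M)<\infty$, existence of minimizers and $I(V)=0$, and for this case you offer only a plan. Its first option would not work as stated. Relative minimizers in a compact subdomain $\Omega$ are free-boundary curves ending on $\partial\Omega$, so $\int_\Sigma\Delta_\Sigma u^{-\gamma}$ no longer vanishes. The second variation also picks up a term from the second fundamental form of $\partial\Omega$, which has no sign. In the second option, the contradiction does not come from non-existence of the bubble (it exists, thanks to the barriers) but from its stability.

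The $\mu$-bubble route does close the gap, and in fact proves the whole theorem at once; your isoperimetric argument then becomes an alternative proof in the closed case. Suppose $d(x,y)>D=2\pi/\sqrt{4-\gamma}$ and put $a=1-\frac{\gamma}{4}$. Let $\rho$ be a smoothing of $d_x-\epsilon$. Pick $c<\sqrt a$ close enough to $\sqrt a$ that $c\,\sup|\nabla\rho|<\sqrt a$ and the band $\{0\le\rho\le\pi/c\}$ still separates $x$ from $y$; the band is compact by Hopf--Rinow. On it, minimize $\int_{\partial^*\Omega}u^\gamma-\int_\Omega h\,u^\gamma$ among sets containing $\{\rho=0\}$, with $h=\frac{1}{\sqrt a}\cot(c\rho)$. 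This $h$ tends to $+\infty$ at the inner boundary and to $-\infty$ at the outer one. The minimizer's boundary $\Sigma$ therefore lies in the interior, and it is nonempty because a minimizing geodesic from $x$ to $y$ crosses the band. The first variation gives $H+\gamma Y=h$. Your computation with $X$ replaced by $h$, plus the extra term from $\partial_t h=h_\nu u^{-\gamma}$, gives
\[0\le\int_\Sigma u^{-\gamma}\Bigl(-1-\bigl(1-\tfrac{\gamma}{4}\bigr)h^2-h_\nu\Bigr).\]
Since $h'=-\frac{c}{\sqrt a}(1+ah^2)$, we have $-h_\nu\le|\nabla\rho|\,|h'|<1+ah^2$. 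The integrand is therefore negative, a contradiction. Hence $d(x,y)\le D$ for all $x,y$, which gives boundedness, compactness and the diameter bound simultaneously. This stability computation, and the choice of $h$ satisfying a strict differential inequality, are what your sketch still needs to supply.
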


Putting $D_\gamma \coloneqq \frac{2\pi}{\sqrt{4-\gamma}}$, since $D_\gamma>\pi$ this estimate does not recover the sharp diameter bound $\pi$ from Cheng's theorem and this suggests us that a spectral analogue of Cheng's rigidity theorem may fail in dimension two as well. In fact, we have the following proposition:

\begin{prop}
Let $0<\gamma<4$. Then there exists a smooth metric $g$ on $\mathbb S^2$, which is not isometric to the round metric, such that $\lambda_1(- \gamma\Delta_g+K_g)\geq 1$ and $\diam(\mathbb S^2,g)=\pi.$
\end{prop}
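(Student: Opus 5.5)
The plan is to mimic the construction in the proof of Proposition \ref{sharpness for gamma}. I take a rotationally symmetric metric
\[ g=dt^2+a(t)^2\,d\theta^2,\qquad t\in[0,\pi],\]
with $a(t)\le\sin t$, together with a radial test function $u=u(t)>0$. I choose the pair so that the spectral quantity $-\gamma\frac{\Delta_g u}{u}+K_g$ is explicitly $\geq 1$. For a warped metric on a surface one has $K_g=-\frac{a''}{a}$ and $\frac{\Delta_g u}{u}=\frac{u''}{u}+\frac{a'u'}{au}$.

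Write $u=e^{f}$ and couple the warping function to $u$ by the ansatz $a(t)=\sin t\,e^{-\lambda f(t)}$, where $\lambda>0$ is to be chosen. Then $\frac{a'}{a}=\cot t-\lambda f'$, and hence
\[\frac{a''}{a}=-1-\lambda f''-2\lambda\cot t\,f'+\lambda^2 (f')^2 .\]
Substituting into the two formulas above, a direct computation gives
\[-\gamma\frac{\Delta_g u}{u}+K_g = 1+(\lambda-\gamma)f''+(2\lambda-\gamma)\cot t\,f'+(\gamma\lambda-\gamma-\lambda^2)(f')^2 .\]
The natural choice $\lambda=\gamma$ removes the $f''$ term and leaves
\[-\gamma\frac{\Delta_g u}{u}+K_g=1+\gamma f'\bigl(\cot t-f'\bigr).\]
So it suffices to pick $f\ge 0$ with $f'(\cot t-f')\ge0$.

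I take $f=\varepsilon\sin^2 t$ with $0<\varepsilon<\tfrac12$. Then
\[f'(\cot t-f')=2\varepsilon\cos^2 t\,(1-2\varepsilon\sin^2 t)\ge 0,\]
so $u_\varepsilon=e^{\varepsilon\sin^2t}$ satisfies $-\gamma\Delta_g u_\varepsilon+K_g u_\varepsilon\ge u_\varepsilon$. This gives $\lambda_1(-\gamma\Delta_g+K_g)\ge 1$. For smoothness, $a_\varepsilon(t)=\sin t\,e^{-\gamma\varepsilon\sin^2 t}$ is an odd function of $t$ near $0$ and of $t-\pi$ near $\pi$, with $a_\varepsilon'(0)=1$ and $a_\varepsilon'(\pi)=-1$. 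Hence all its even derivatives vanish at the poles and $g$ extends smoothly to $\mathbb S^2$. The function $u_\varepsilon$ depends on $\sin^2 t$, so it is smooth and positive on $\mathbb S^2$.

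The diameter step is the same as in Proposition \ref{sharpness for gamma}. Since $f\ge0$ we have $a_\varepsilon\le\sin t$, so $g\le g_{\mathbb S^2}$ as quadratic forms and therefore $\diam\le\pi$. On the other hand, any curve joining the poles has length at least $\int_0^\pi dt=\pi$, and the meridians realize this, so $\diam(\mathbb S^2,g)=\pi$.

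Finally, $g$ is not round: its area $2\pi\int_0^\pi a_\varepsilon\,dt$ is strictly less than $4\pi$. Any constant curvature metric with diameter $\pi$ must be the unit sphere (already by Cheng, or directly), which has area $4\pi$. I do not expect a real obstacle here; the only delicate point is finding the coupling $\lambda=\gamma$ that makes the $f''$ term disappear. The restriction $\gamma<4$ is not even needed for this construction.
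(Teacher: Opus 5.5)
Your proposal is correct and is essentially the paper's construction: the paper's pair also satisfies $a_\varepsilon=\sin t\,u_\varepsilon^{-\gamma}$, which is exactly your coupling $\lambda=\gamma$. The only changes are the profile and the final step. The paper uses $u_\varepsilon=(1-\varepsilon\sin^2 t)^{-1/\gamma}$ with $\varepsilon<\frac{\gamma}{\gamma+2}$ instead of your $u_\varepsilon=e^{\varepsilon\sin^2 t}$ with $\varepsilon<\frac12$, and it upgrades to $\lambda_1>1$ via the first eigenfunction, whereas you read off $\lambda_1\ge 1$ directly from the positive supersolution, which is all the statement (and the paper's definition of $\lambda_1\ge k$) requires.
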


\begin{proof}
Fix $0<\varepsilon<\frac{\gamma}{\gamma+2}$ and consider on $\mathbb S^2$ the same rotationally symmetric metric considered in the proof of Proposition \ref{sharpness for gamma}, that is, \[g_\varepsilon \coloneqq dt^2+a_\varepsilon(t)^2\ d\theta^2, \qquad 0\leq t\leq\pi,\] where $a_\varepsilon(t) = \sin t\left(1-\varepsilon\sin^2t\right).$ We saw that this metric extends smoothly across the two poles and we now claim that \[ \lambda_1 \left(-\gamma\Delta_{g_\varepsilon}+K_{g_\varepsilon}\right)>1.\] To see this, define $u_\varepsilon(t) = \left(1-\varepsilon\sin^2t\right)^{-1/\gamma}$
which is a smooth positive function on $\mathbb S^2$. For a general rotationally symmetric metric $h=dt^2+a(t)^2\,d\theta^2$ and a function $u=u(t)$, one has the following relation: \[\Delta_h u = u''+\frac{a'}{a}u' \qquad \text{and} \qquad K_h = -\frac{a''}{a}.\] A direct computation for the particular case of $g_\varepsilon$ and $u_\varepsilon$ therefore gives \[\frac{-\gamma\Delta_{g_\varepsilon}u_\varepsilon + K_{g_\varepsilon}u_\varepsilon}{u_\varepsilon} = 1+ \frac{2\varepsilon\cos^2t}{\gamma\left(1-\varepsilon\sin^2t\right)^2} \left[\gamma-(\gamma+2)\varepsilon\sin^2t\right].\] Since $0<\varepsilon<\frac{\gamma}{\gamma+2}$ we have $(-\gamma\Delta_{g_\varepsilon}+K_{g_\varepsilon}) u_\varepsilon \geq u_\varepsilon$ with strict inequality away from the equator. Now let $\phi_1>0$ be the first eigenfunction of the operator $-\gamma\Delta_{g_\varepsilon}+K_{g_\varepsilon}$. Integrating by parts we obtain
\begin{align*}
    0 < \int_{\mathbb S^2} \phi_1 \left[(-\gamma\Delta_{g_\varepsilon}+K_{g_\varepsilon}) u_\varepsilon-u_\varepsilon \right] \ dA_{g_\varepsilon} &= \int_{\mathbb S^2} u_\varepsilon \left[(-\gamma\Delta_{g_\varepsilon}+K_{g_\varepsilon}) \phi_1-\phi_1 \right] \ dA_{g_\varepsilon}\\
    &= (\lambda_1-1) \int_{\mathbb S^2} u_\varepsilon\phi_1\ dA_{g_\varepsilon}.
\end{align*}
Since both $u_\varepsilon$ and $\phi_1$ are positive, it follows that $\lambda_1 \left(-\gamma\Delta_{g_\varepsilon}+K_{g_\varepsilon} \right)>1.$ The same argument in Proposition \ref{sharpness for gamma} shows that the diameter of $(\mathbb S^2, g_\varepsilon)$ is exactly $\pi$.
\end{proof}

Hu-Xu-Zhang proved in \cite{HuXuZhang2025} that the bound $D_\gamma$ for the diameter in Theorem \ref{diam n=2} is sharp in the following sense: there exists a sequence of complete Riemannian surfaces $(M_j,g_j)$ satisfying the spectral condition $\lambda_1(-\gamma\Delta_{g_j} + K_{g_j}) \geq 1$ and with $\lim\limits_{j\rightarrow +\infty} \diam(M_j,g_j) = D_\gamma$. The same proof of Proposition \ref{no spectral Cheng for higher dimensions} adapted for this sequence can be used to prove that in dimension two there is also no constant lower bound for $\lambda_1(-\gamma\Delta+K)$ yielding a sharp diameter estimate $\diam(M,g)\leq\pi$ with equality attained precisely by the unit round $2$-sphere (Proposition \ref{no spectral Cheng for n=2} below). In particular, the spectral analogue of Cheng's rigidity theorem fails in dimension two.

\begin{prop}\label{no spectral Cheng for n=2}
Let $0<\gamma< 4$. There is no constant $C\in\R$ such that every closed Riemannian surface $(M^2, g)$ satisfying $\lambda_1(-\gamma\Delta_g+K_g)\geq C$ also satisfies $\diam(M,g)\leq\pi$ with equality attained if and only if $(M^2,g)$ is isometric to the unit round $2$-sphere.
\end{prop}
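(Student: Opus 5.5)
The plan mirrors the proof of Proposition \ref{no spectral Cheng for higher dimensions}, with the Hu-Xu-Zhang sequence in place of the Hong-Wang examples. Argue by contradiction and suppose such a constant $C$ exists.

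\textbf{Step 1: an upper bound for $C$.} The unit round sphere must satisfy the hypothesis, since it attains equality. On $(\mathbb S^2,g_{\mathbb S^2})$ we have $K\equiv 1$, so the constant functions are first eigenfunctions of $-\gamma\Delta+K$. Hence $\lambda_1=1$, and therefore $C\leq 1$.

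\textbf{Step 2: a near-extremal sequence.} Take the sequence $(M_j,g_j)$ of Hu-Xu-Zhang \cite{HuXuZhang2025}. These surfaces satisfy $\lambda_1(-\gamma\Delta_{g_j}+K_{g_j})\geq 1$ and $d_j\coloneqq\diam(M_j,g_j)\to D_\gamma=\frac{2\pi}{\sqrt{4-\gamma}}$.

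\begin{itemize}
\item By Theorem \ref{diam n=2}, each $M_j$ is compact, so it is a closed surface and lies in the class under consideration.
\item Since $0<\gamma<4$, we have $4-\gamma<4$, which gives $D_\gamma>\pi$. Hence $d_j>\pi$ for all large $j$.
\end{itemize}

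\textbf{Step 3: rescaling.} Set $c_j=\pi/d_j<1$ and $\hat g_j=c_j^2 g_j$, so that $\diam(M_j,\hat g_j)=\pi$. Under constant rescaling, $\Delta_{\hat g_j}=c_j^{-2}\Delta_{g_j}$ and $K_{\hat g_j}=c_j^{-2}K_{g_j}$. Therefore the Rayleigh quotient, or equivalently any positive supersolution $u_j$, scales by $c_j^{-2}$, and
\[
\lambda_1(-\gamma\Delta_{\hat g_j}+K_{\hat g_j})\geq \Bigl(\frac{d_j}{\pi}\Bigr)^2>1\geq C.
\]
So $(M_j,\hat g_j)$ satisfies the hypothesis and has diameter exactly $\pi$. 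By the assumed rigidity, it must be isometric to the unit round sphere.

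\textbf{Step 4: the contradiction.} The unit round sphere has $\lambda_1=1$. This contradicts $\lambda_1(-\gamma\Delta_{\hat g_j}+K_{\hat g_j})>1$, since $\lambda_1$ is an isometry invariant.

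\textbf{Main obstacle.} The delicate point is the precise content of the Hu-Xu-Zhang examples. We need each $M_j$ to be a smooth closed surface and the spectral condition to hold with constant exactly $1$, either via a positive $C^{2,\alpha}$ supersolution or via the integral form. If their examples are only complete, Theorem \ref{diam n=2} supplies compactness. If their spectral bound is only asymptotically $1$, say $\geq 1-\delta_j$ with $\delta_j\to 0$, the argument still works for large $j$, because $(d_j/\pi)^2(1-\delta_j)\to D_\gamma^2/\pi^2>1$.
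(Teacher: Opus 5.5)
Your proposal is correct and is essentially the paper's argument: the paper proves this statement by adapting the proof of Proposition \ref{no spectral Cheng for higher dimensions} to the Hu--Xu--Zhang sequence. That adaptation consists of exactly your steps: $C\le 1$ from the round sphere, rescaling to diameter $\pi$, and using $D_\gamma>\pi$. Your Step 4 makes explicit the non-isometry that the paper leaves implicit, by comparing $\lambda_1=1$ on the round sphere with $\lambda_1\ge (d_j/\pi)^2>1$ for the rescaled surfaces.
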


\vspace{0.25cm}

\bibliographystyle{plain}
\bibliography{bibliography}
\end{document}